\documentclass[12pt,a4paper]{amsart}      

\usepackage{amsmath,amsthm,amssymb,latexsym,a4wide,tikz,multicol,tikz-cd}
\usepackage{arydshln,multirow}
\usepackage{tikz-qtree,tikz-qtree-compat}
\usepackage{mathtools,stmaryrd}
\usepackage{tikz}
\tikzset{font=\small}
\usepackage{enumerate}
\usetikzlibrary{matrix,arrows}
\usetikzlibrary{positioning}
\usetikzlibrary{cd}

\usepackage{mathrsfs}

\usepackage{graphicx}
\usepackage{amsmath,amssymb}
\usepackage{color,amscd,amsthm}
\usepackage{tikz}
\usepackage{float}
\usetikzlibrary{arrows, intersections}
\pgfdeclarelayer{bg}
\pgfdeclarelayer{fg}
\pgfdeclarelayer{crossing over}
\pgfsetlayers{main,bg,crossing over,fg}
\usepackage{hyperref}

\definecolor{brown}{rgb}{1.0, 0.66, 0.07}
\definecolor{green}{cmyk}{0.64,0,0.95,0.40}

\numberwithin{equation}{section}

\newtheorem{theorem}{Theorem}[section]
\newtheorem*{theorem*}{Theorem}
\newtheorem{lema}[theorem]{Lemma}
\newtheorem{lemma}[theorem]{Lemma}

\newtheorem{corol}[theorem]{Corollary}
\newtheorem{corollary}[theorem]{Corollary}
\newtheorem{prop}[theorem]{Proposition}
\newtheorem{proposition}[theorem]{Proposition}

\newtheorem{quest}{Question}

\newtheorem*{clm*}{Claim}

\newtheorem{definition}[theorem]{Definition}
\newtheorem*{definition*}{Definition}

\newcommand{\E}{\mathcal{E}}
\newcommand{\F}{\mathcal{F}}

\newcommand{\calA}{\mathcal{A}}
\newcommand{\calB}{\mathcal{B}}
\newcommand{\B}{\mathtt{wB}}

\newcommand{\calN}{\mathcal{N}}
\newcommand{\calC}{\mathcal{C}}

\newcommand{\NF}{\operatorname{NF}}

\newcommand{\set}[2]{\{#1:\ #2\}}

\DeclareMathOperator{\cl}{cl}
\DeclareMathOperator{\Int}{Int}

\newcommand{\uopen}[1]{\mathtt{Op}(#1)}
\newcommand{\uclosed}[1]{\mathtt{Cl}(#1)}
\newcommand{\uzero}[1]{\mathtt{Z}(#1)}
\newcommand{\ucozero}[1]{\mathtt{Cz}(#1)}

\newcommand{\w}{\omega}

\title{Ultrafilter approach to remote points}
\author{Serhii Bardyla}
\address{University of Vienna, Institute of Mathematics, Kolingasse 14-16, 1090 Vienna, Austria.}
\email{sbardyla@gmail.com}
\author{Jaroslav \v Supina}
 \address{Institute of Mathematics,  P.J. \v{S}af\'arik University in Ko\v sice,  Jesenn\'a 5, 040 01 Ko\v{s}ice, Slovakia}
\email{jaroslav.supina@upjs.sk}

\thanks{The research of the first named author was funded in whole by the Austrian Science Fund FWF [10.55776/ESP399]. Second named author was supported by the Slovak Research and Development Agency under the Contract no. APVV-20-0045 and by the~grant 1/0657/22 of Slovak Grant Agency VEGA}

\makeatletter

\subjclass[2020]{54D35, 28A99, 54D80}
\keywords{Remote point, open ultrafilter, closed ultrafilter, outer regular measure}

\title{New approaches to remote points}

\begin{document}
    
    \begin{abstract}
For a given Tychonoff space $X$, a point $p\in \beta(X)\setminus X$ is called {\em remote} if $p$ is not in the closure of any nowhere dense subset of $X$.    
In this paper, we characterize spaces with remote points in terms of certain topological ultrafilters, measures, and compact-like properties corresponding to the ideal consisting of nowhere dense sets. It is shown that the space of remote points is homeomorphic to a subspace of the Stone space taken over the smallest Boolean algebra containing all open and nowhere dense sets. Also, we show that the space of remote points of $\mathbb R$ is $\omega$-bounded. 
    \end{abstract}
\maketitle


\section{Introduction and main results}\label{S-intro}

A {\em filter} on a set $X$ is a family of subsets of $X$ that does not contain the empty set, and is closed under finite intersections and taking supersets. A filter $\F$ on a set $X$ is called an {\em ultrafilter} if for each $A\in X$ either $A\in \F$ or $X\setminus A\in \F$, or equivalently, $\F$ is maximal with respect to the inclusion among filters on $X$.
If we consider filters on a topological space $X$, it is natural to assume that they are somehow connected with the topological structure of $X$. An instance of such connection is presented in the following  definition. 
 
\begin{definition}\rm
Let $R$ be a property of subsets of a space $X$. A filter $\F$ on $X$ is called
\begin{enumerate}[\rm(i)]
\item an {\em $R$ filter} if $\F$ possesses a base consisting of sets with property $R$;
\item an {\em $R$ ultrafilter} if $\F$ is an $R$ filter, and for any subset $A\subseteq X$ with property $R$ either $A\in \F$ or $X\setminus A\in \F$;
\end{enumerate}
\end{definition}
It is straightforward to check that each $R$ ultrafilter is maximal with respect to the inclusion among $R$ filters. The converse implication holds true, if the property $R$ is preserved under finite intersections. 
In this paper, we are mostly dealing with the properties of being open, closed, zero, or cozero. Recall that a subset $A$ of a space $X$ is called a {\em zero set} if $A=f^{-1}(0)$ for some continuous function $f\colon X\rightarrow [0,1]$. Furthermore, the set $A$ is called {\em cozero} if $X\setminus A$ is a zero set.
If $\F$ is simultaneously an $R_i$~(ultra)filter for each $i\leq n$, then $\F$ is called an $R_0$-$\cdots$-$R_n$ (ultra)filter. 
For more on open and closed filters, see \cite{BSZ} and references therein.

For a Tychonoff space $X$ by $\beta (X)$ we denote the Stone-\v{C}ech compactification of $X$. We shall consider the representation of $\beta (X)$ given in~\cite[\S~2]{CoNe}. Namely, $\beta (X)$ is the set of all zero ultrafilters on $X$ endowed with the topology $\tau$ generated by the base
$$\mathcal B=\{\{u\in \beta (X): A\notin u\}: A\hbox{ is a zero subset of }X\}.$$ Note that in this representation, a point $x$ of $X$ is identified with a zero ultrafilter generated by all zero subsets of $X$ that contain $x$. As usual, the remainder $\beta (X)\setminus X$ is denoted by $X^*$. A~point~$p\in X^*$ is called {\em remote} if $p$ is not in the closure of any nowhere dense subset of~$X$. We write that a~space $X$ {\em has a~remote point} if there is a~remote point~$p\in X^*$. 
Remote points have been intensively studied in \cite{B08,BC09,ChS80,vD,ED81,vDvM84,D84,D89,DP88,HGHTM12, V91}. 
For applications of remote points see~\cite{AJW14,DHG19,Du,HG15}.
One of the central questions regarding remote points is the problem of their existence. 
For instance, the following result appears in~\cite{D89}.
\begin{theorem}[Dow]\label{pseudocompact}
    If $X$ is a~pseudocompact space, then $X$ does not have remote points.
\end{theorem}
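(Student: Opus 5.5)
Given a point $p\in X^*$, I will construct a nowhere dense set $N\subseteq X$ with $p\in\cl_{\beta X}N$. This shows $p$ is not remote. The construction uses pseudocompactness through its standard reformulation: every locally finite family of nonempty open subsets of $X$ is finite. Equivalently, every countable family of pairwise disjoint nonempty open sets has an accumulation point.

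\textbf{Step 1: a disjoint sequence of open sets.} Since $p\notin X$, the closure of any zero set in the ultrafilter $p$ in $\beta X$ contains $p$, and no zero set of $p$ is compact. Otherwise $p$ would be a principal, hence fixed, ultrafilter, and so $p$ would lie in $X$. Using complete regularity, I recursively choose points $x_n\in X$, continuous functions $f_n\colon\beta X\to[0,1]$, and open sets $U_n=\{x\in X: f_n(x)>0\}$ with the following properties. Each $\cl_{\beta X}U_n$ misses a neighbourhood of $p$ that shrinks along a fixed decreasing sequence of neighbourhoods $V_n$ of $p$ in $\beta X$. The sets $U_n$ are pairwise disjoint, and $U_n\subseteq V_n$. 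This is possible because $p\in\cl(V_n\cap X)\setminus X$: each $V_n$ meets $X$ in a set whose closure does not contain $p$ outside arbitrarily small pieces, so I can always pick a new point $x_n\in V_n\cap X$ and a small cozero neighbourhood of it inside $V_n$ whose $\beta X$-closure avoids $p$.

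\textbf{Step 2: thin the open sets to a nowhere dense set.} Inside each $U_n$, I take a nowhere dense closed set $N_n\ni x_n$ whose closure in $\beta X$ approaches $p$. The simplest choice is $N_n=\{x_n\}$ when $x_n$ is not isolated. To handle isolated points, I will instead take the \emph{boundary} $\partial W_n$ of a smaller cozero set $W_n$ with $\cl W_n\subseteq U_n$. The union $N=\bigcup_n\partial W_n$ is nowhere dense provided the family $\{U_n\}$ does not accumulate badly. Here pseudocompactness is used in the \emph{opposite} direction: the family $\{U_n\}$ is not locally finite, and I want to choose it so that it accumulates only at $p$ in $\beta X$. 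Then, by the disjointness, $\cl_X N=\bigcup_n\cl\partial W_n\cup(\text{accumulation points in }X)$.

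\textbf{Expected main obstacle.} The scheme as written has a flaw, and the fix decides the proof. I expect Step 2 to be the hard part. Dow's argument instead works with the \emph{regular closed} algebra and shows directly that $\{\cl_{\beta X}\cl_X U: U\in\mathcal U\}$ cannot separate $p$ from nowhere dense sets. Suppose $p$ were remote. Then for each nowhere dense $N$ there is a zero set $Z\in p$ with $Z\cap \cl N=\emptyset$. Take a maximal disjoint family $\mathcal U$ of cozero sets whose closures miss $p$; its union is dense, so the set $X\setminus\bigcup\mathcal U$ is nowhere dense. By remoteness, $p$ lies in $\cl_{\beta X}$ of a countable subunion. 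Then pseudocompactness forces the union of any countable disjoint family of open sets to be \emph{$C^*$-embedded-like}: a continuous bounded $g=\sum 2^{-n}\cdot(\text{bump on }U_n)$ cannot separate $p$, because along $U_n$ the values would have to converge while $p$ is not in any single $\cl U_n$. I would try first to derive the contradiction by building $g=\sum_n(-1)^n h_n$ with bumps $h_n$ supported in $U_n$ and peak $1$. Pseudocompactness makes $\{U_n\}$ accumulate at some $y\in X$, where $g$ would be discontinuous. Hence $\{U_n\}$ is locally finite only if finite, and I expect the remoteness of $p$ to produce exactly such an infinite locally finite family. That is the contradiction.
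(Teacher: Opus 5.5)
Your proposal has a genuine gap: the one step that actually uses remoteness is never carried out.

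Steps 1--2 cannot work as designed, as you partly acknowledge. A decreasing sequence $(V_n)$ of neighbourhoods of $p$ need not be a base at $p$, so nothing forces $p\in\cl_{\beta X}\bigcup_n N_n$. Moreover, in a pseudocompact space the infinite disjoint family $\{U_n\}$ of nonempty open sets must accumulate at a point of $X$, so it cannot be made to accumulate ``only at $p$''.

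Your fallback paragraph starts correctly: a maximal disjoint family $\mathcal U$ of open sets whose $\beta X$-closures miss $p$, with dense union and nowhere dense complement. It then goes off track. The claim ``by remoteness, $p$ lies in $\cl_{\beta X}$ of a countable subunion'' is not a consequence of remoteness. For instance, take $X$ discrete of size $\omega_1$, $\mathcal U$ the singletons, and $p$ a uniform ultrafilter: $p$ is remote but lies in the closure of no countable set. The function $g=\sum_n(-1)^nh_n$ is bounded, so at best that paragraph re-derives the equivalence of pseudocompactness and feeble compactness, which you assumed at the outset. The decisive claim, that remoteness of $p$ yields an infinite locally finite family of nonempty open sets, is stated only as an expectation. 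Yet it is the whole content of the theorem.

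The missing idea is short and uses exactly your $\mathcal U$.
\begin{enumerate}[(1)]
\item The set $N=X\setminus\bigcup\mathcal U$ is closed and nowhere dense, so remoteness gives $p\notin\cl_{\beta X}N$. Hence there is a continuous $h\colon\beta X\to[0,1]$ with $h(p)=1$ and $h\equiv 0$ on $\cl_{\beta X}N$.
\item Put $F=\{x\in X: h(x)>1/2\}$. Since $X$ is dense in $\beta X$, we get $p\in\cl_{\beta X}F$. Also $\cl_XF\subseteq\{h\ge 1/2\}\cap X\subseteq\bigcup\mathcal U$.
\item The nonempty sets among $F\cap U$, $U\in\mathcal U$, form a locally finite family. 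A point of some $U\in\mathcal U$ has the neighbourhood $U$, which meets only $F\cap U$. A point of $N$ has the neighbourhood $X\setminus\cl_XF$, which meets none of them.
\item The family is infinite. Otherwise $F\subseteq U_1\cup\dots\cup U_k$, so $p\in\bigcup_{i\le k}\cl_{\beta X}U_i$, contrary to the choice of $\mathcal U$.
\end{enumerate}
Hence $X$ is not feebly compact, i.e.\ not pseudocompact.

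This is precisely the paper's Proposition~\ref{feebly compact}. The paper runs the same argument with the trace of $\calN(p)$ on $X$ in place of the function $h$; that trace is a closed-open ultrafilter without accumulation points by van Douwen's theorem and Proposition~\ref{remote}.
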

 
Van Douwen~\cite{ED81} showed that each non-pseudocompact Tychonoff space with countable $\pi$-weight has a remote point.
The latter result was generalized in~\cite{D84} as follows.  
\begin{theorem}[Dow]\label{dow-02}
    If $X$ is a Tychonoff non-pseudocompact ccc space with $\pi$-weight at most $\omega_1$, then $X$ has a remote point.
\end{theorem}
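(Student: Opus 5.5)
Since $X$ is not pseudocompact, I fix a discrete sequence of nonempty open sets $U_0,U_1,\dots$ (pairwise disjoint, locally finite) together with continuous functions witnessing this, so that any union $\bigcup_{n\in A}\overline{U_n}$ is closed and these unions separate well. A remote point will be a zero ultrafilter that contains every zero set of the form $X\setminus\bigcup_{n<k}U_n$ and avoids the closure of every nowhere dense set. Equivalently, for every nowhere dense $N$, the ultrafilter must contain a zero set disjoint from $N$.

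Let $\mathcal B$ be a $\pi$-base of size $\le\omega_1$, and fix a $\pi$-base $\mathcal B_n$ for each $U_n$ of size $\le\omega_1$. Every nowhere dense set $N$ is then handled by a \emph{choice} $\varphi$: for each $n$ a maximal antichain $\mathcal A_n\subseteq\mathcal B_n$ whose closures avoid $N$. By ccc each $\mathcal A_n$ is countable, so $\bigcup_n\bigcup\mathcal A_n$ is an open set missing $\overline N$ and dense in $\bigcup_n U_n$. Since $X$ is Tychonoff, I shrink this set to a cozero set $W_N$ whose closure misses $N$. The resulting family $\{W_\varphi\}$ has size at most $\omega_1^{\omega}$. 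This is too large for a plain diagonalization, so instead I use the countable structure: a countable family of cozero sets of the form ``dense in all but finitely many $U_n$'' always has the finite intersection property modulo finite sets of indices. I then build the remote filter by a transfinite recursion of length $\omega_1$, indexed by the maximal antichains from $\mathcal B$.

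The recursion follows the strategy of van Douwen and Dow. I enumerate all countable maximal antichains of $\mathcal B$ restricted to the $U_n$'s, which number at most $\omega_1^\omega$. Under $\pi$-weight $\le\omega_1$ and ccc, each antichain is determined by a countable subset of $\mathcal B$. So it suffices to handle $\omega_1$ many ``codes'': I reduce to $\omega_1$ countable elementary submodels, or equivalently an $\omega_1$-chain of countable subalgebras of the regular open algebra. At stage $\alpha$ I have a countable filter $\mathcal F_\alpha$ of sets of the form $\bigcup_n \overline{V_n}$, where each $V_n\subseteq U_n$ is open and nonempty for all $n$ in some infinite set $I_\alpha$. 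The sets $I_\alpha$ are decreasing mod finite, and every nowhere dense set coded at stages $<\alpha$ is avoided. At a successor stage, given a new nowhere dense set $N$, I thin each current $V_n$ to a nonempty open $V_n'\subseteq V_n\setminus\overline N$. This is possible since $N$ is nowhere dense, and because the family is locally finite the union of closures stays closed and disjoint from $\overline N$. At a limit stage of countable cofinality, I diagonalize: I take a pseudo-intersection $I_\alpha$ of the $I_\beta$ and use, for $n\in I_\alpha$, a nonempty open set inside the finitely many relevant $V^\beta_n$. Arranging the enumeration so that only finitely many constraints apply at coordinate $n$ makes this intersection nonempty.

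Finally, the family $\{\bigcup_{n\in I_\alpha}\overline{V^\alpha_n}\}_{\alpha<\omega_1}$ has the finite intersection property, so I extend it to a zero ultrafilter $p$, using zero-set neighbourhoods, i.e.\ Tychonoff functions inside each $V_n$. Then $p\in X^*$, since it avoids every compact set, and $p$ is remote because every nowhere dense $N\subseteq X$ is covered: its closure misses some member of the filter. The main obstacle is ensuring that $\omega_1$ stages suffice to treat \emph{all} nowhere dense sets, of which there may be $2^{\omega_1}$ many. This is exactly where ccc enters. The closure of a nowhere dense set is avoided once the complement's dense open set is captured by a countable maximal antichain in the $\pi$-base, and I must organize those countably many pieces coordinatewise so that the limit steps converge. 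If the coordinatewise bookkeeping fails, I would fall back on Dow's original elementary-submodel argument from \cite{D84}.
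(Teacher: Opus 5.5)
The paper gives no proof of this theorem (it is quoted from \cite{D84}), so your sketch has to stand on its own. It has a genuine gap exactly where you flag it. Your recursion treats one nowhere dense set, or one countable maximal antichain of $\mathcal B$, per successor step, so after $\omega_1$ steps only $\omega_1$ of them have been handled. The claim that ``each antichain is determined by a countable subset of $\mathcal B$, so it suffices to handle $\omega_1$ many codes'' silently assumes CH: there are $\omega_1^{\omega}=2^{\omega}$ countable subsets of $\mathcal B$, and even a countable $\pi$-base codes $2^{\omega}$ maximal antichains. Passing to an $\omega_1$-chain of countable pieces $\mathcal B_\alpha$ does not help unless at stage $\alpha$ you avoid \emph{simultaneously} all $2^{\omega}$ nowhere dense sets coded by $\mathcal B_\alpha$ (a relativized, coherent form of van Douwen's countable $\pi$-weight theorem). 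Your successor step instead thins the $V_n$ against a single $N$. That simultaneous step is the missing idea, and your last sentence defers it to \cite{D84}. What you have actually written is the CH argument: under CH, ccc and $\pi w(X)\le\omega_1$ leave only $\omega_1$ countable maximal antichains in $\mathcal B$, and enumerating them and shrinking each $V_n$ into a member of the current antichain does work. Your second paragraph has the same problem in another guise: a set dense in $\bigcup_n U_n$ cannot have closure missing $N$ when $N$ meets $\bigcup_n U_n$. This tension between density (needed for the finite intersection property) and closures avoiding $N$ is precisely what a ZFC proof must overcome.

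Better bookkeeping will not repair this. Assume $\mathfrak b>\omega_1$ (e.g.\ under $\mathrm{MA}+\neg\mathrm{CH}$) and take $X=\omega\times 2^{\omega}$, which is ccc, non-pseudocompact and of countable $\pi$-weight. Let $F_\alpha=\bigcup_{n\in I_\alpha}\overline{V^\alpha_n}$, $\alpha<\omega_1$, be any tower as produced by your recursion, and pick basic clopen sets $\{m^\alpha_n\}\times[s^\alpha_n]\subseteq V^\alpha_n$. Each compact set $\{m\}\times 2^{\omega}$ meets only finitely many $U_n$, so $h_\alpha(m)=\max\{|s^\alpha_n|: n\in I_\alpha,\ m^\alpha_n=m\}$ is defined on an infinite set; put $h_\alpha(m)=0$ elsewhere. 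Choose $g$ with $h_\alpha\le^* g$ for all $\alpha$, and finite sets $N_m\subseteq 2^{\omega}$ meeting every $[s]$ with $|s|\le g(m)$. Then $N=\bigcup_m\{m\}\times N_m$ is closed discrete, hence nowhere dense, and for each $\alpha$ it meets $V^\alpha_n$ for infinitely many $n\in I_\alpha$. Consequently the tower, the tails $X\setminus\bigcup_{n<k}U_n$ and $N$ together have the finite intersection property. So some zero ultrafilter extending your family lies in $\cl_{\beta(X)}N$ and is not remote. Thus, consistently, no $\omega_1$-tower (decreasing modulo finitely many coordinates) of sets of the form $\bigcup_{n\in I}\overline{V_n}$ can witness remoteness, however the stages are organized. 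Your final claim, that every nowhere dense set is avoided by a member of your filter, fails.
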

Brown and Dow~\cite{BC09} have shown that, assuming CH, the cardinal $\omega_1$ in the latter theorem can be substituted by the cardinal $\omega_2$.  Fine and Gillman~\cite{FG} showed that under CH each separable non-pseudocompact normal space has remote points. On the other hand, Dow~\cite{D89} constructed a consistent example of a Tychonoff separable non-pseudocompact space with no remote points.

In this paper, we give several characterizations of spaces possessing a~remote point. Their formulation requires a few definitions, given below. By $o(X)$ and $cz(X)$ we denote the families of all open subsets of a space $X$ and all cozero subsets of $X$, respectively. 

\begin{definition}\rm
Let $J$ be an ideal on a space $X$. The space $X$ is called 
\begin{enumerate}[(i)]
\item {\em $J$-compact} if for each function $\phi\colon J\rightarrow o(X)$ such that $A\subseteq \phi(A)$ for all $A\in J$ there exists a finite family $\mathcal A\subseteq J$ such that $\bigcup_{A\in\mathcal A}\phi(A)= X$.
    \item {\em weakly $J$-compact} if for each function $\phi\colon J\rightarrow cz(X)$ such that for each $A\in J$ there exists a zero set $Z_A$ satisfying $A\subseteq Z_A\subseteq \phi(A)$ there exists a finite family $\mathcal A\subseteq J$ such that $\bigcup_{A\in\mathcal A}\phi(A)=X$. 
\end{enumerate}
\end{definition}


\begin{definition}\rm
 Let $u$ be an ultrafilter on a space $X$. Define
 \begin{enumerate}[\rm(i)]
\item $\uopen{u}$ is the filter on $X$ generated by all open elements of $u$;
\item $\uclosed{u}$ is the filter on $X$ generated by all closed elements of $u$;
\item $\uzero{u}$ is the filter on $X$ generated by all zero subsets of $X$ that are elements of $u$;
\item $\ucozero{u}$ is the filter on $X$ generated by all cozero subsets of $X$ that are elements of $u$.
\end{enumerate}   
\end{definition}


For a given space $X$ by $\NF$ we denote the ideal on $X$ consisting of all nowhere dense subsets of $X$ together with all finite subsets of $X$. A point $x$ is called an {\em accumulation point} of a filter $\F$, if $x\in \bigcap_{F\in \F}\overline{F}$.

The following result characterizes spaces with remote points.
\begin{theorem}\label{main1}
For a Tychonoff space $X$ the following conditions are equivalent:
\begin{enumerate}[\rm(a)]
    \item $X$ has a remote point;
    \item There exists a zero-cozero-closed-open ultrafilter on $X$ with no accumulation points;
    \item There exists a non-principal ultrafilter $u$ on $X$ such that $\uopen{u}=\uclosed{u}=\uzero{u}=\ucozero{u}$;
    \item $X$ is not weakly $\NF$-compact.
\end{enumerate}
\end{theorem}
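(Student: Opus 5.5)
We will prove the equivalences through the cycle (a)$\Rightarrow$(b)$\Rightarrow$(c)$\Rightarrow$(a), and then treat (a)$\Leftrightarrow$(d) separately. Throughout we use the standard fact about a zero ultrafilter $p\in\beta(X)$: $p$ lies in the closure of a set $A\subseteq X$ exactly when every zero set $Z\in p$ meets $A$. Thus $p$ is remote if and only if for every nowhere dense $N$ there is a zero set $Z\in p$ with $Z\cap N=\emptyset$. Since $X$ is Tychonoff, we may shrink such a $Z$ to a zero set whose complement is a cozero neighbourhood of $\overline N$. Equivalently, $p$ is remote if and only if every $Z\in p$ has nonempty interior and in fact satisfies $\overline{\Int Z}\in p$-type conditions. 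The key lemma we aim for is: if $p$ is remote and $Z\in p$ is a zero set, then there is a zero set $Z'\in p$ with $Z'\subseteq \Int Z$. To get it, apply remoteness to the nowhere dense set $\partial Z = Z\setminus \Int Z$, which is closed nowhere dense, and intersect.

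\textbf{(a)$\Rightarrow$(b).} Let $p$ be remote and let $u$ be the filter generated by the interiors $\Int Z$ for $Z\in p$. By the lemma, $u$ has bases consisting of zero sets, of cozero sets, of closed sets and of open sets. To show $u$ is an open ultrafilter, take an open set $U$. The boundary $\partial U$ is nowhere dense, so some zero set $Z\in p$ misses $\partial U$. Then $Z\cap\overline U$ and $Z\setminus U$ are disjoint closed sets covering $Z$. Separating them by zero sets (they are clopen in $Z$, hence zero sets relative to $Z$, and so zero sets of $X$), primeness of $p$ puts one of them into $p$. Hence either $U\in u$ or $X\setminus U\supseteq Z\setminus U\in u$. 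The same argument applied to $X\setminus F$ handles closed sets, and zero and cozero sets are covered as special cases. Finally, $u$ has no accumulation point: for $x\in X$, since $p\ne x$ there is a zero set in $p$ missing a neighbourhood of $x$.

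\textbf{(b)$\Rightarrow$(c).} Extend the filter from (b) to an ultrafilter $v$ on $X$; it is non-principal because there are no accumulation points. Every open set or its complement lies in the original filter, and that filter has bases of each of the four types. Hence $\uopen v$, $\uclosed v$, $\uzero v$ and $\ucozero v$ all coincide with the original filter.

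\textbf{(c)$\Rightarrow$(a).} Put $p=\uzero u$. This is a zero ultrafilter: for a zero set $Z$, either $Z\in u$, or $X\setminus Z\in u$, which is cozero and hence contains a zero set of $u$. For a nowhere dense $N$, the set $X\setminus\overline N$ is dense open. We must show it belongs to $u$. Otherwise $\overline N\in\uclosed u=\uopen u$, so some nonempty open set would lie inside $\overline N$, which is impossible. Thus $X\setminus\overline N\in\uopen u=\uzero u$ gives a zero set of $p$ missing $N$. It remains to show $p\notin X$. If $p=x$, then every cozero neighbourhood of $x$ lies in $\ucozero u=\uzero u$. For each $y\ne x$, some cozero set $C$ containing $x$ has $y\notin\overline C$, and $C\in u$; so $u$ converges only to $x$. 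To conclude we need non-principality to force a contradiction, and this is the delicate point. We handle it by noting that $\{x\}$ or its complement is closed/open: if $x$ is not isolated, then $\{x\}$ is nowhere dense, and the remoteness argument above gives a zero set of $p$ missing $x$, a contradiction. If $x$ is isolated, then $\{x\}$ is open and lies in $u$, so $u$ is principal.

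\textbf{(a)$\Leftrightarrow$(d).} $X$ fails weak $\NF$-compactness exactly when there is $\phi$ with $A\subseteq Z_A\subseteq\phi(A)$ and no finite subcover. In that case the family $\{X\setminus\phi(A)\}$ consists of zero sets with the finite intersection property. Extend it to a zero ultrafilter $p$. Then $p$ avoids every nowhere dense set $A$ via $X\setminus\phi(A)$ together with $Z_A$, and it avoids every point, since finite sets are in $\NF$. So $p$ is remote. Conversely, given a remote $p$, choose for each $A\in\NF$ a zero set $W_A\in p$ missing $\overline A$ (for finite $A$ this uses $p\notin X$). Let $\phi(A)$ be a cozero set with $\overline A\subseteq Z_A\subseteq\phi(A)\subseteq X\setminus W_A$, obtained from the complete separation of $\overline A$ and $W_A$. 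A finite cover by the $\phi(A)$ would contradict the finite intersection property of the $W_A$.

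The main obstacle is the lemma used in (a)$\Rightarrow$(b), namely getting zero sets of $p$ inside the interiors of other zero sets, together with the primeness argument for open sets. Both rely on carefully applying remoteness to boundaries.
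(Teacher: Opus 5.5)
\textbf{There is a genuine gap.} Your structure works, and (b)$\Rightarrow$(c), (d)$\Rightarrow$(a) and the non-principality argument in (c)$\Rightarrow$(a) are fine. The problem is that throughout you use ``some member of $p$ is disjoint from $A$'' where ``some member of $p$ is completely separated from $A$'' is needed. In a general Tychonoff space these are different.

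Your opening criterion is false. Since $\cl_{\beta X}A=\bigcap\{\cl_{\beta X}Z\colon Z\supseteq A\text{ a zero set}\}$, we have $p\notin\cl_{\beta X}A$ iff some zero set $V\supseteq A$ is disjoint from some $W\in p$. For a counterexample to your version, take $X=\mathbb R$, $p=0$ and $A=(0,1)$: the zero set $\{0\}\in p$ misses $A$, yet $p\in\cl A$. This breaks three steps.
\begin{enumerate}
\item[(1)] In (a)$\Rightarrow$(b), the inference ``clopen in $Z$, hence a zero set of $Z$, hence a zero set of $X$'' is wrong. In the Niemytzki plane the $x$-axis is the zero set of $(x,y)\mapsto y$ and is discrete. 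Its rational and irrational parts cannot be separated by disjoint open sets, so they are not both zero sets of $X$. Hence, when $Z$ merely misses $\partial U$, you cannot invoke primeness of $p$ for $Z\cap\overline U$ and $Z\setminus U$.
\item[(2)] Your lemma $Z'\subseteq\Int Z$ gives no cozero set between $Z'$ and $Z$. So the claimed cozero base of $u$ is unproved.
\item[(3)] In (c)$\Rightarrow$(a), a zero set of $p$ missing $N$ does not by itself show $p\notin\cl_{\beta X}N$.
\end{enumerate}

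\textbf{Repair.} Use remoteness in its correct form: for a closed nowhere dense $N$ there are a zero set $V=g^{-1}(0)\supseteq N$ with $g\ge 0$, and a set $W\in p$ with $V\cap W=\emptyset$.
\begin{enumerate}
\item[(1)] Take $N=\partial U$. Let $c$ equal $g$ on $X\setminus(V\cup\overline U)$ and $0$ elsewhere. Then $c$ is continuous: $X$ is covered by $V$ and the open sets $U\setminus V$ and $X\setminus(V\cup\overline U)$, and at points of $V$ the function is squeezed by $g$. Hence $W\cap U$ is a zero set, namely the zero set of $w+c$ where $w^{-1}(0)=W$. Symmetrically $W\setminus U$ is a zero set, and now primeness applies.
\item[(2)] Take $N=\partial Z$. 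Then $Z\setminus V$ is clopen in the cozero set $X\setminus V$, hence cozero, being the cozero set of $g\cdot\chi_{Z\setminus V}$. Moreover $Z\cap W\subseteq Z\setminus V\subseteq Z$, which gives the cozero base.
\item[(3)] In (c)$\Rightarrow$(a), use $\uopen{u}=\ucozero{u}=\uzero{u}$ to pick a cozero $C\in u$ and a zero set $Z'\in u$ with $Z'\subseteq C\subseteq X\setminus\overline N$. Then $X\setminus C$ is a zero set containing $N$ and disjoint from $Z'\in p$.
\end{enumerate}
In (a)$\Rightarrow$(d), choose $W_A$ in the same way, so that the complete separation you invoke actually holds.

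For comparison, the paper avoids these zero-set computations by working with the trace of $\mathcal N(p)$ on $X$. Van Douwen's theorem supplies the open/closed ultrafilter property, and cozero and zero sets of $\beta X$ restrict to give the zero-cozero base. The trace is then identified with $p$.
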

 
Normal spaces with remote points can be additionally characterized by certain measures.
Let $X$ be a space and $\calA$ be an algebra of subsets of $X$, i.e., $\{\emptyset,X\}\subseteq \calA$ and $\calA$ is closed under finite unions, intersections and complements.
A function $\mu\colon\calA\rightarrow [0,\infty]$ is called a {\em finitely additive measure} if for every disjoint $A,B\in\calA$ we have $\mu(A\cup B)=\mu(A)+\mu(B)$. 
If, in addition, $\mu$ has values only $0$ or $1$, then $\mu$ is called a {\em finitely additive $\{0,1\}$-measure}.   
Following~\cite{buk-str}, a~measure $\mu$ on a~space $X$ is said to be {\em outer regular} if for each $A\in\calA$, $$\mu(A)=\inf\{\mu(U):\ A\subseteq U,\ U\text{ is open}\}.$$
Measures on topological spaces were investigated in \cite{BNR, BNP16,DP15, DP07}.

For a space $X$ let $$\B(X)=\{(U\setminus P)\cup Q\colon U \hbox{ is open in } X \hbox{ and } P,Q \hbox{ are nowhere dense in }X\}.$$
It is easy to check (see Lemma~\ref{algebraclear}) that $\B(X)$ is the smallest algebra containing all open and all nowhere dense subsets of $X$. 

The following result characterizes normal spaces with remote points. 
\begin{theorem}\label{wrB}
For a normal space $X$ the following conditions are equivalent:
\begin{enumerate}[\rm(a)]
\item $X$ has a remote point;
\item There is a closed-open ultrafilter on $X$ with no accumulation points;
\item There exists a non-principal ultrafilter $u$ on $X$ such that $\uopen{u}=\uclosed{u}$;
\item $X$ is not $\NF$-compact;
\item There is an outer regular finitely additive $\{0,1\}$-measure~$\mu$ defined on $\B(X)$ such that $\mu(A)=0$ for all $A\in\NF$;
\item There is an outer regular finitely additive measure~$\mu$ defined on $\B(X)$ such that $\mu(X)=\infty$ and $\mu(A)<\infty$ for all $A\in\NF$.   
\end{enumerate}    
\end{theorem}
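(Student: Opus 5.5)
The plan is to reduce everything to Theorem~\ref{main1}, using normality to collapse the zero/cozero conditions onto the closed/open ones, and then to translate between ultrafilters and $\{0,1\}$-measures on $\B(X)$.

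\textbf{Step 1: the normality lemma.} The basic tool is the following. In a normal space, for every closed $F$ and open $U\supseteq F$ there is a zero set $Z$ and a cozero set $C$ with $F\subseteq C\subseteq Z\subseteq U$; this follows from Urysohn's lemma.

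\textbf{Step 2: (b)$\Leftrightarrow$(c)$\Leftrightarrow$(a).} Let $u$ be a closed-open ultrafilter. Every open $U\in u$ contains a closed $F\in u$, and then the sets $C,Z$ of Step~1 lie in $u$, since the closed-open ultrafilter condition forces them in. Hence $u$ is a zero-cozero-closed-open ultrafilter. The converse is immediate. So (b) here matches (b) of Theorem~\ref{main1}. Similarly, $\uopen{u}=\uclosed{u}$ together with Step~1 gives $\uzero{u}=\ucozero{u}=\uopen{u}$. Theorem~\ref{main1} then yields (a)$\Leftrightarrow$(b)$\Leftrightarrow$(c).

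\textbf{Step 3: (a)$\Leftrightarrow$(d).} In the definition of weak $\NF$-compactness, take $\phi\colon\NF\to o(X)$ with $A\subseteq\phi(A)$. The sets $A$ may be replaced by their closures $\overline A$, which are still in $\NF$. By Step~1, choose a cozero $C_A$ and a zero $Z_A$ with $\overline A\subseteq Z_A\subseteq C_A\subseteq\phi(A)$, arranging the order so that $Z_A$ lies inside the cozero set. The map $A\mapsto C_A$ then witnesses weak compactness. Thus weakly $\NF$-compact implies $\NF$-compact, the converse is trivial, and Theorem~\ref{main1}(d) applies.

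\textbf{Step 4: (b)$\Rightarrow$(e).} Given a closed-open ultrafilter $u$ with no accumulation points, extend it to an ultrafilter $p$ on $X$. Set $\mu(B)=1$ iff $B\in p$, restricted to $\B(X)$; this is additive. We need $\mu(A)=0$ for $A\in\NF$. If a nowhere dense $A$ belonged to $p$, then $X\setminus\overline A$ is open and dense. By the ultrafilter property either $\overline A$ or its complement is in $u$. If $\overline A\in u$, then $X\setminus\overline A\notin u$, so the open set $X\setminus\overline A$ does not meet every closed set of $u$. I would use the remote-point/no-accumulation structure of Theorem~\ref{main1}: a closed-open ultrafilter $u$ with no accumulation points contains the complement of the closure of every nowhere dense set. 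This is exactly remoteness of the associated point. So $X\setminus\overline A\in u\subseteq p$, a contradiction. Finite sets are excluded because $u$ has no accumulation points.

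For outer regularity, take $B=(U\setminus P)\cup Q$ with $\mu(B)=1$. Then $B\cap(U\setminus\overline P)\in p$, so $\mu(U)=1$. If $\mu(B)=0$, we must find an open $V\supseteq B$ with $\mu(V)=0$. The complement $X\setminus B$ lies in $p$ and contains $F\setminus\overline{Q}$, where $F=X\setminus U$ is closed, up to nowhere dense sets. So the closed part $F\cup\overline P$ contains a member of $p$. Maximality of $u$ gives a closed $K\in u$ with $K\cap U\subseteq\overline P$ essentially. Then $V=(X\setminus K)\cup(\text{open nbhd of }\overline Q\text{ off }u)$ works, using Step~1 and the remoteness property to choose a neighbourhood of $\overline Q$ outside $u$. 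This last choice is the main obstacle, and it needs care.

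\textbf{Step 5: (e)$\Rightarrow$(f) and (f)$\Rightarrow$(d).} For (e)$\Rightarrow$(f), take $\infty\cdot\mu$ with the convention $\infty\cdot0=0$.

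For (f)$\Rightarrow$(d), given $\mu$, suppose $X$ is $\NF$-compact. For each $A\in\NF$, outer regularity yields an open $\phi(A)\supseteq A$ with $\mu(\phi(A))<\infty$. A finite subcover then gives $\mu(X)<\infty$, a contradiction. So $X$ is not $\NF$-compact, and the cycle closes.

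I expect the outer regularity in Step~4 to be the delicate point. An alternative I would try first is to define $\mu(B)=1$ iff some open $U\in u$ satisfies $U\setminus B\in\NF$. Additivity then follows from the ultrafilter property on regular open sets, because every member of $\B(X)$ differs from an open set $W$ by a nowhere dense set.
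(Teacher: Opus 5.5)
Your architecture is sound, and slightly leaner than the paper's: closing the cycle (b)$\Rightarrow$(e)$\Rightarrow$(f)$\Rightarrow$(d) spares you the direction (e)$\Rightarrow$(b), which in the paper needs Propositions~\ref{measuretech} and~\ref{op-to-cl_ultra}. But that cycle rests entirely on Step~4, and there outer regularity is not actually proved.

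\textbf{The gap in Step~4.} The sketch you give fails as written. Knowing only $K\cap U\subseteq\overline P$, the open set $X\setminus K$ may miss points of $U\cap(\overline P\setminus P)$. Those points belong to $B$, so adjoining a neighbourhood of $\overline Q$ does not give $V\supseteq B$. Your alternative definition of $\mu$ does not help either. Since $X\setminus\overline N\in u$ for every nowhere dense $N$, it simply says $\mu(B)=1$ iff $B\in u$, and it leaves regularity untouched.

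\textbf{The missing observation.} You need that $u$ itself decides every member of $\B(X)$. Let $B=(U\setminus P)\cup Q$ with $\mu(B)=0$, so $B\notin p\supseteq u$.
\begin{itemize}
\item Then $U\notin u$. Otherwise $U\setminus\overline P\in u$, because an open ultrafilter contains every dense open set, and $U\setminus\overline P\subseteq B$.
\item Hence $X\setminus U\in u$. Together with $X\setminus\overline Q\in u$ this gives $(X\setminus U)\setminus\overline Q\in u$, which is a subset of $X\setminus B$.
\item Since $u$ is a closed filter, choose a closed $K\in u$ inside this set. Then $V=X\setminus K$ is open and contains $B$, and $\mu(V)=0$ because $K\in u\subseteq p$.
\end{itemize}
The case $\mu(B)=1$ is trivial by monotonicity. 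This is the paper's Proposition~\ref{regularmeasure}. Note that no separate ``neighbourhood of $\overline Q$ off $u$'' is needed, since $X\setminus\overline Q\in u$ already.

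\textbf{Two smaller points.}
\begin{itemize}
\item In Step~3 you cannot apply Step~1 to $\overline A\subseteq\varphi(A)$, since $\varphi(A)$ need only contain $A$. Apply it instead to $\overline A\subseteq\varphi(\overline A)$ and set $\psi(A)=C_{\overline A}$. A finite cover by the $\psi(A_i)$ then yields a cover by the $\varphi(\overline{A_i})$ with $\overline{A_i}\in\NF$, exactly as in Proposition~\ref{lweakly}.
\item In Step~4 the detour through remoteness to get $X\setminus\overline A\in u$ is unnecessary. An open filter contains no set with empty interior, so $\overline A\notin u$, and the open ultrafilter property gives $X\setminus\overline A\in u$ at once.
\end{itemize}
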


For a Tychonoff space $X$, let $\rho(X)$ be the subspace of $X^*$ consisting of all remote points of $X$. The space $\rho(X)$ was investigated by Hernández-Gutiérrez, Hrušák and Tamariz-Mascarúa in~\cite{HGHTM12}. Note that for each space $X$ the algebra $\B(X)$ is a Boolean algebra.  By $S(\B(X))$ we denote the {\em Stone space} of the Boolean algebra $\B(X)$. For more about Stone spaces see~\cite[\S\ 2]{CoNe} or \cite{Kop89}. Recall that a space $X$ is called {\em $\omega$-bounded} if the closure of each countable subset of $X$ is compact. 

\begin{theorem}\label{Stone}
 For any Tychonoff space $X$, $\rho(X)$ is homeomorphic to a subspace of $S(\B(X))$.
 Moreover, if $X$ is additionally locally compact and $\sigma$-compact, then
  $\rho(X)$ is $\omega$-bounded.
 \end{theorem}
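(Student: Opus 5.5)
We treat the two assertions separately. For the embedding we use the representation of $\beta(X)$ by zero ultrafilters. We then assign to each remote point $p$ an ultrafilter $\varphi(p)$ on the Boolean algebra $\B(X)$ of Lemma~\ref{algebraclear}. The first step is the standard reformulation of remoteness: $p\in X^*$ is remote iff for every nowhere dense $N\subseteq X$ there is a zero set $Z\in p$ with $Z\cap N=\emptyset$. This holds because $p\notin\cl_{\beta X}N$ yields a zero-set neighbourhood of $p$ in $\beta X$ missing $\cl_{\beta X}N$, and its trace on $X$ is such a $Z$. We then put
$$\varphi(p)=\{A\in\B(X):\ Z\subseteq A \hbox{ for some zero set } Z\in p\}.$$
This is clearly a filter on $\B(X)$. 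Every nowhere dense set lies outside it, since a zero set in $p$ missing a nowhere dense $N$ exists by remoteness.

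The main point, and the step I expect to be the obstacle, is that $\varphi(p)$ is an ultrafilter. Write $A=(U\setminus P)\cup Q$. Then $X\setminus A=(V\setminus P')\cup Q'$ with $V=X\setminus\cl U$ and $P',Q'$ nowhere dense, the boundary of $U$ being absorbed into them. Take $Z_0\in p$ disjoint from the nowhere dense set $P\cup Q\cup P'\cup Q'\cup \operatorname{bd}U$. Then $Z_0$ is the disjoint union of the relatively clopen sets $Z_0\cap U$ and $Z_0\cap V$. We must show that one of them contains a zero set belonging to $p$. My first attempt is to exploit that $p$ is a zero ultrafilter and to find a continuous $f\colon X\to[0,1]$ which is $0$ on $Z_0\cap U$ and $1$ on $Z_0\cap V$; then $Z_0\cap f^{-1}[0,\frac12]$ and $Z_0\cap f^{-1}[\frac12,1]$ are zero sets, and one of them lies in $p$. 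Producing $f$ in a general Tychonoff space is the delicate point. If $f$ cannot be built directly, I would replace $Z_0$ by a smaller zero set in $p$, again using remoteness applied to suitable nowhere dense sets, so that the two pieces become completely separated.

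With the ultrafilter property in hand, the remaining verifications are routine. For injectivity, distinct $p,q$ contain disjoint zero sets $Z_1,Z_2$. Since $Z_i\setminus\Int Z_i$ is nowhere dense, remoteness gives $\Int Z_1\in\varphi(p)$ and $\Int Z_2\in\varphi(q)$, and these are disjoint. For continuity, a basic set $\{w\in S(\B(X)):A\in w\}$ pulls back to $\{p:A\in\varphi(p)\}$, which is open in $\rho(X)$ by the ultrafilter argument applied to $U$ and $V$. For openness onto the image, the basic open set $\{p: Z\notin p\}$ of $\rho(X)$ corresponds to the trace of $\{w: X\setminus Z\in w\}$.

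For $\omega$-boundedness, let $X$ be locally compact and $\sigma$-compact, and let $D=\{d_n:n\in\omega\}\subseteq\rho(X)$. Since $X^*$ is compact, it suffices to show that every point of $\cl_{\beta X}D$ is remote, i.e.\ $\cl_{\beta X}D\cap\cl_{\beta X}N=\emptyset$ for every nowhere dense $N$. Fix $N$ and an exhaustion of $X$ by compact sets $K_0\subseteq\Int K_1\subseteq K_1\subseteq\cdots$. For each $n$ pick a zero set $Z_n\in d_n$ disjoint from $\cl N$; we may shrink it so that it avoids $K_n$, since $d_n\in X^*$. Next fix a closed neighbourhood $C_n$ of $Z_n$ with $C_n\cap\cl N=\emptyset$. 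Using the exhaustion, the family $\{C_n\setminus \Int K_n\}$ is locally finite, so its union $C$ is closed and disjoint from $\cl N$. Every $d_n$ lies in $\cl_{\beta X}C$, because $Z_n\subseteq C$. Finally, $X$ is normal, being paracompact, so the disjoint closed sets $C$ and $\cl N$ are completely separated. Hence their closures in $\beta X$ are disjoint, which gives $\cl_{\beta X}D\subseteq\cl_{\beta X}C$, disjoint from $\cl_{\beta X}N$, as required.
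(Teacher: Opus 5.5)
The $\omega$-boundedness part is correct and is essentially the paper's argument: push a member of each $d_n$ off $K_n$ and off $\cl N$, take the locally finite (hence closed) union, and use normality to separate it from $\cl N$ in $\beta X$. The $C_n$ are not needed, since the $Z_n$ already form a locally finite family. Your $\varphi(p)$ is exactly the paper's identification of $p$ with a point of $S(\B(X))$, and your injectivity and openness-onto-the-image arguments are fine.

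The gap is the step you flag yourself. You never prove that $\varphi(p)$ is an ultrafilter on $\B(X)$, and your continuity argument depends on it. As written, ``if $f$ cannot be built directly, I would replace $Z_0$\dots'' is not an argument.

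You are stuck because your reformulation of remoteness keeps only that $Z_0$ is \emph{disjoint} from $N:=P\cup Q\cup\operatorname{bd}U$. In a non-normal space, disjointness alone gives no way to completely separate $Z_0\cap U$ from $Z_0\cap V$, since relatively clopen parts of a zero set need not be completely separated.

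Your own derivation yields more. The zero-set neighbourhood $Z'$ of $p$ and $\cl_{\beta X}N$ are disjoint closed subsets of $\beta X$. So there is a continuous $g\colon X\to[0,1]$ with $g=0$ on $Z_0:=Z'\cap X\in p$ and $g=1$ on $N$.

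Put $h=\max(0,1-2g)$ on $U$ and $h=0$ on $X\setminus U$. Then $h$ is continuous, because it vanishes on $\{g>1/2\}$, which is an open neighbourhood of $\operatorname{bd}U$. Moreover $h=1$ on $Z_0\cap U$ and $h=0$ on $Z_0\cap V$.

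Hence $Z_0\cap U\subseteq U\setminus P\subseteq A$ and $Z_0\cap V\subseteq X\setminus A$ are zero sets partitioning $Z_0$, so one of them lies in $p$. This proves that $\varphi(p)$ is an ultrafilter.

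The same $h$ gives continuity. If $A\in\varphi(p)$, the cozero set $\{h>0\}\subseteq U\setminus P\subseteq A$ contains $Z_0\cap U\in p$. Then $\{q\in\rho(X): X\setminus\{h>0\}\notin q\}$ is an open neighbourhood of $p$ on which $A\in\varphi(q)$.

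The paper avoids this computation. It invokes Proposition~\ref{remote}, which rests on van Douwen's Theorem~\ref{douwen-traces}, to see that $p$ generates a closed-open-zero-cozero ultrafilter. It then notes that closed-open ultrafilters are $\B(X)$ ultrafilters, and uses the cozero base of $p$ for continuity.
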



Theorems~\ref{main1},~\ref{wrB} and~\ref{Stone} are proven in Section~\ref{5} after all the preliminary work is done in sections \ref{s2}--\ref{s4}.

\section{Filters on topological spaces}\label{s2}


Let $R$ be a~property of subsets of a topological space $X$. We write that a subset $A\subseteq X$ has property $R^c$ if $X\setminus A$ has property $R$.   
\par
\begin{proposition}\label{complement_filters}
    Let $\F$ be a~filter on a space~$X$. If $\F$ is both, an $R$ filter and $R^c$ ultrafilter, then $\F$ is an $R$ ultrafilter. 
\end{proposition}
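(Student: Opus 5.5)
Since $\F$ is an $R$ filter by hypothesis, the only thing left to check is the dichotomy in the definition of an $R$ ultrafilter. So let $A\subseteq X$ have property $R$; we must show that $A\in\F$ or $X\setminus A\in\F$. The plan is to apply the $R^c$ ultrafilter property to the complement $B=X\setminus A$, and then translate back.

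The steps are as follows.

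\emph{Step 1.} Since $X\setminus B=A$ has property $R$, the set $B$ has property $R^c$.

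\emph{Step 2.} Because $\F$ is an $R^c$ ultrafilter, either $B\in\F$ or $X\setminus B\in\F$.

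\emph{Step 3.} Since $B=X\setminus A$ and $X\setminus B=A$, Step 2 says exactly that $X\setminus A\in\F$ or $A\in\F$.

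Together with the assumption that $\F$ has a base of sets with property $R$, this shows that $\F$ is an $R$ ultrafilter.

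The only point needing care is that the definition of an $R^c$ ultrafilter is applied to sets having property $R^c$, and the complement of an $R$ set is such a set. This is immediate from the definition of $R^c$, since $X\setminus(X\setminus A)=A$. I do not expect any real obstacle. The hypothesis that $\F$ is an $R$ filter is used only for the base requirement; the dichotomy comes entirely from the $R^c$ ultrafilter assumption.
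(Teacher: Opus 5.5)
Your proof is correct and matches the paper's: both apply the $R^c$ ultrafilter dichotomy to the $R^c$ set $X\setminus A$, for an arbitrary set $A$ with property $R$. The paper argues by contradiction, assuming $A\notin\F$ and ruling out an $F\in\F$ disjoint from $X\setminus A$; your direct reading of the dichotomy gives the same conclusion more cleanly.
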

\begin{proof}
    Assume that an $R$ filter $\F$ is an $R^c$ ultrafilter and fix a set $A\not\in\F$ with property $R$. Since $\F$ is an $R^c$ ultrafilter, the set $X\setminus A$ is either an element of $\F$ or there is $F\in\F$ disjoint with $X\setminus A$. The latter case yields $F\subseteq A$, and so $A\in\F$, a~contradiction. Hence the set $X\setminus A$ is an element of $\F$ disjoint with~$A$. Thus $\F$ is an $R$ ultrafilter.
\end{proof}
\par

Proposition~\ref{complement_filters} implies the following.

\begin{corollary}\label{corollary list}
Let $\F$ be a~filter on a space~$X$. Then the following hold: 
\begin{enumerate}[\rm(1)]
    \item If $\F$ is an open filter and a closed ultrafilter, then $\F$ is an open ultrafilter.
    \item If $\F$ is a~closed filter and an open ultrafilter, then $\F$ is a~closed ultrafilter.
    \item If $\F$ is a~cozero filter and a~zero ultrafilter, then $\F$ is a~cozero ultrafilter.
    \item If $\F$ is a~zero filter and a~cozero ultrafilter, then $\F$ is a~zero ultrafilter.
    \item If $\F$ is an open-closed filter, then $\F$ is a~closed ultrafilter if and only if $\F$ is an open ultrafilter.    
    \item If $\F$ is a~zero-cozero filter, then $\F$ is a~cozero ultrafilter if and only if $\F$ is a~zero ultrafilter.    
\end{enumerate}   
\end{corollary}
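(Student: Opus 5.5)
The plan is to derive all six items from Proposition~\ref{complement_filters} by taking $R$ to be the appropriate property of subsets of $X$ and noting that $R^c$ is then the complementary property. Since a set is open iff its complement is closed, and cozero iff its complement is zero, ``closed'' is ``open''$^c$, ``open'' is ``closed''$^c$, ``zero'' is ``cozero''$^c$, and ``cozero'' is ``zero''$^c$. The whole proof is bookkeeping of these identifications, and no single step should be a real obstacle.

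For (1), take $R$ to be ``open''. Then $R^c$ is ``closed'', and the hypothesis says $\F$ is an $R$ filter and an $R^c$ ultrafilter, so Proposition~\ref{complement_filters} makes $\F$ an open ultrafilter. For (2), take $R$ to be ``closed'', so $R^c$ is ``open''. For (3), take $R$ to be ``cozero'', so $R^c$ is ``zero''. For (4), take $R$ to be ``zero'', so $R^c$ is ``cozero''. In each case the hypotheses match those of the proposition exactly.

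For (5), suppose $\F$ is both an open filter and a closed filter. If $\F$ is a closed ultrafilter, then (1) applies, since $\F$ is an open filter, so $\F$ is an open ultrafilter. Conversely, if $\F$ is an open ultrafilter, then (2) applies, since $\F$ is a closed filter, so $\F$ is a closed ultrafilter. Item (6) follows in the same way from (3) and (4), using that $\F$ is simultaneously a zero filter and a cozero filter.

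The only point to check is that being an $R$ filter means having a base of $R$-sets, which is exactly what the hypotheses ``open filter'', ``closed filter'' and so on provide. Nothing beyond Proposition~\ref{complement_filters} is needed.
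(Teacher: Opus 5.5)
Your proof is correct and follows the paper's route: the paper obtains all six items directly from Proposition~\ref{complement_filters}, instantiating $R$ as open, closed, cozero and zero. Items (5) and (6) are then just the two directions given by the earlier items, exactly as you argue.
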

In addition to item (1) in Corollary \ref{corollary list}, we give an alternative assumption that forces a closed ultrafilter to be an open ultrafilter.
\begin{prop}\label{op-to-cl_ultra}
Let $\F$ be a closed ultrafilter on a space $X$ such that $A\notin \F$ for each nowhere dense subset $A$ of $X$. Then $\F$ is an open ultrafilter. 
\end{prop}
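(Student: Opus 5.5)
We have to show two things: $\F$ has a base of open sets, and for every open $U$ either $U\in\F$ or $X\setminus U\in\F$. The plan is to get the dichotomy first and then obtain the open base from it.

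\emph{Dichotomy for open sets.} Fix an open $U\subseteq X$. The complement $X\setminus U$ is closed. Since $\F$ is a closed ultrafilter, either $X\setminus U\in\F$ or $U\in\F$, and in both cases we are done. (In any filter, the complement of a set is in the filter exactly when some element of the filter misses the set.) So the dichotomy is just the closed-ultrafilter property applied to $X\setminus U$.

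\emph{Open base.} Take $F\in\F$. We may assume $F$ is closed, since $\F$ has a closed base. The candidate open element below $F$ is $\Int F$. The boundary $F\setminus \Int F$ of a closed set $F$ is closed and nowhere dense. By hypothesis $F\setminus\Int F\notin\F$. Because this set is closed and $\F$ is a closed ultrafilter, its complement $X\setminus(F\setminus \Int F)$ belongs to $\F$. Intersecting with $F$ gives $F\cap (X\setminus(F\setminus\Int F))=\Int F\in\F$. Hence every closed element of $\F$ contains an open element of $\F$, so the open elements of $\F$ form a base.

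Combining the two steps, $\F$ is an open filter, and by the dichotomy it is an open ultrafilter. Alternatively, once $\F$ is known to be an open filter, Corollary~\ref{corollary list}(1) applies directly. The only nontrivial point is the open-base step. It relies on two facts: the boundary of a closed set is closed and nowhere dense, and the ultrafilter property lets us discard sets outside $\F$ by adding their complements.
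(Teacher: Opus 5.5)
Your proposal is correct and follows the paper's proof essentially step for step: pass to a closed $F'\in\F$, note that $F'\setminus\Int(F')$ is a closed nowhere dense set and so is not in $\F$, and conclude that $\Int(F')\in\F$. The only difference is that you check the open-set dichotomy directly from the closed-ultrafilter property applied to $X\setminus U$, where the paper cites Corollary~\ref{corollary list}(1); that corollary rests on the same one-line argument.
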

\begin{proof}
Fix any $F\in\F$. Since $\F$ is a closed filter, there exists a~closed $F'\in \F$ such that $F'\subseteq F$.
 Note that the set $P:=F'\setminus \Int(F')$ is nowhere dense, so $P\not\in\F$. Since the set $P$ is closed and $\F$ is a closed ultrafilter, there is $G\in\F$ such that $G\cap P=\emptyset$. Then $G\cap F'\subseteq \Int(F')$, implying that $\Int(F')\in\F$. Since $F$ was arbitrarily chosen, $\F$ is an open filter. Corollary~\ref{corollary list}(i) implies that $\F$ is an open ultrafilter. 
\end{proof}
Recall that a space $X$ is said to have {\em ccc property}, if each family of pairwise disjoint open sets is at most countable. A subset $A$ of a space $X$ is said to have ccc property, if $A$ equipped with the subspace topology has ccc property.

\begin{prop}\label{cl-zero}
Let $\F$ be an open ultrafilter on a Tychonoff space $X$ such that $\F$ has an element with a ccc property.
    Then $\F$ is a~cozero ultrafilter.       
\end{prop}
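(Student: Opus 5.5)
The plan is to check the two defining conditions of a cozero ultrafilter separately. The ultrafilter condition is immediate from the hypothesis. The real content is showing that $\F$ has a base of cozero sets, and the ccc element is used exactly there.

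First, the ultrafilter condition. Every cozero set $C$ is open. Since $\F$ is an open ultrafilter, either $C\in\F$ or $X\setminus C\in\F$. So once $\F$ is known to be a cozero filter, it is automatically a cozero ultrafilter.

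Second, the base. Let $W\in\F$ have the ccc property. Since $\F$ is an open filter, we may shrink $W$ and assume it is open; ccc passes to open subspaces, because disjoint open subsets of an open subspace are open in the larger space. Fix an open $U\in\F$ and put $V=U\cap W$, which is an open element of $\F$.
\begin{enumerate}[(1)]
\item By Zorn's lemma, choose a maximal family $\mathcal C$ of pairwise disjoint nonempty cozero subsets of $X$, each contained in $V$.
\item Each member of $\mathcal C$ is open in the subspace $W$, and the members are pairwise disjoint. Since $W$ is ccc, $\mathcal C=\{C_n:n\in\omega\}$ is countable.
\item Set $C=\bigcup_n C_n$. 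A countable union of cozero sets is cozero: if $C_n=X\setminus f_n^{-1}(0)$, then $C=X\setminus g^{-1}(0)$ for $g=\sum_n 2^{-n}f_n$. Moreover $C\subseteq V\subseteq U$.
\end{enumerate}

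It remains to show $C\in\F$. Suppose not. Since $C$ is open and $\F$ is an open ultrafilter, $X\setminus C\in\F$. Because $\F$ has an open base, there is an open $G\in\F$ with $G\cap C=\emptyset$. Then $G\cap V\in\F$, so it is a nonempty open subset of $V$ disjoint from every $C_n$. Since $X$ is Tychonoff, cozero sets form a base, so $G\cap V$ contains a nonempty cozero set. Adding that set to $\mathcal C$ contradicts maximality.

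Hence every open $U\in\F$ contains a cozero element $C\in\F$. Since $\F$ has an open base, it therefore has a cozero base, and $\F$ is a cozero ultrafilter. The only delicate point is the countability step (2), which is what makes the union $C$ cozero. This is precisely where the ccc element is needed, and the reason for working inside $V=U\cap W$ rather than $U$.
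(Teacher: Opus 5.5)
Your proof is correct and follows essentially the same route as the paper: take a maximal pairwise disjoint family of cozero sets inside an open element of $\F$ lying in the ccc element, use ccc to make it countable, and note that its union is a cozero set belonging to $\F$. The only difference is in that last membership step: you verify it directly from the open-ultrafilter property. The paper instead observes that $\bigcup\calC$ is dense in $B$ and cites Lemma~3.1 of \cite{BSZ}.
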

\begin{proof}
It suffices to show that $\F$ has a~base consisting of cozero sets. Fix an arbitrary set $F\in\F$ with ccc property. It is straightforward to check that each open subset of $F$ has ccc property. Then the family $\mathcal B=\{U\subseteq F: U\hbox{ is open and } U\in\F\}$ forms a base of $\F$ consisting of open sets with ccc property. Fix any $B\in\mathcal B$.  Find a~maximal pairwise disjoint family $\calC$ of subsets of $B$, which are cozero in~$X$. By ccc property of $B$, the family $\calC$ is countable. Since $X$ is Tychonoff, the maximality of $\calC$ implies that the open subset $\bigcup\calC$ is dense in $B$. Lemma~3.1 from \cite{BSZ} implies that $\bigcup\calC\in \F$. Moreover, $\bigcup\calC$, being a~countable union of cozero sets, is a~cozero set itself. Hence $\F$ is a cozero filter, as required.
\end{proof}


\begin{prop}\label{pnormal}
 If $\F$ is a closed-open ultrafilter on a~normal space $X$, then $\F$ is a~zero-cozero ultrafilter.      
 \end{prop}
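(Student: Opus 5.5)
We first show that $\F$ has a base of zero sets and a base of cozero sets. Then we use Corollary~\ref{corollary list}(6) to upgrade this to the ultrafilter properties.

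\emph{Cozero base.} Fix $F\in\F$. Since $\F$ is an open filter, there is an open $U\in\F$ with $U\subseteq F$. Since $\F$ is a closed filter, there is a closed $C\in\F$ with $C\subseteq U$. The sets $C$ and $X\setminus U$ are disjoint and closed, so Urysohn's lemma for the normal space $X$ gives a continuous $f\colon X\to[0,1]$ with $f|_C=1$ and $f|_{X\setminus U}=0$. Then $C\subseteq f^{-1}((0,1])\subseteq U\subseteq F$. The middle set is cozero and contains $C\in\F$, so it lies in $\F$. Hence $\F$ is a cozero filter.

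\emph{Zero base.} With the same $f$, take $Z=f^{-1}(\{1\})$ or $Z=f^{-1}([1/2,1])$. This is a zero set with $C\subseteq Z\subseteq U\subseteq F$, so $Z\in\F$. Hence $\F$ is a zero filter. So $\F$ is a zero-cozero filter, and by Corollary~\ref{corollary list}(6) it remains to show that $\F$ is a zero ultrafilter.

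\emph{Zero ultrafilter.} Let $Z$ be a zero set. It is closed, and $\F$ is a closed ultrafilter, so either $Z\in\F$ or $X\setminus Z\in\F$. This is exactly the defining condition for a zero ultrafilter. Combined with the zero base, $\F$ is a zero ultrafilter, and Corollary~\ref{corollary list}(6) then makes it a cozero ultrafilter. Alternatively, the cozero property follows directly because cozero sets are open and $\F$ is an open ultrafilter.

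The only step with real content is producing the cozero and zero sets sandwiched between a closed member of $\F$ and an open member of $\F$. This is exactly where normality is used, through Urysohn's lemma. Everything else follows formally from the definitions.
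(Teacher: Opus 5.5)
Your proof is correct and follows essentially the same route as the paper: you sandwich a closed member of $\F$ inside an open member and use Urysohn's lemma to insert $f^{-1}([1/2,1])$ and $f^{-1}((0,1])$, which gives a zero-cozero filter. You then observe that the ultrafilter conditions transfer because zero sets are closed and cozero sets are open; the paper summarizes that last step as ``it suffices to show that $\F$ is a zero-cozero filter.''
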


\begin{proof}
It suffices to show that $\F$ is a zero-cozero filter. Fix any set $F\in \F$. Since $\F$ is an open filter, without loss of generality we can assume that $F$ is open. 
Since $\F$ is a closed filter, there exists a closed $G\subseteq F$ such that $G\in\F$. Since the space $X$ is normal, by the Urysohn Lemma~\cite[Theorem 1.5.11]{Eng} there exists a continuous function $f\colon X\rightarrow [0,1]$ such that $f{\restriction}_G\equiv 1$ and $f{\restriction}_{X\setminus F}\equiv 0$. Observe that $G\subseteq f^{-1}([1/2,1])\subseteq f^{-1}((0,1])\subseteq F$, implying $f^{-1}((0,1])\in \F$ and $f^{-1}([1/2,1])\in\F$. Since $f^{-1}((0,1])$ is a cozero set and $f^{-1}([1/2,1])$ is a zero set, we get that $\F$ is a zero-cozero filter, as required.    
\end{proof}

The following question is left open.

\begin{quest}\label{quest}
Does there exist a Tychonoff space $X$ possessing a free closed-open ultrafilter that is not a zero-cozero ultrafilter?
\end{quest}

Noteworthy, the filter on the ordinal $\w_1$ (endowed with the order topology) generated by complements of all initial segments is a zero-cozero ultrafilter that is neither open ultrafilter nor closed ultrafilter.

Recall that a space $X$ has a {\em large inductive dimension $0$} if two nonempty disjoint closed subsets can be separated by a clopen set. The following result gives a partial answer to Question~\ref{quest}.

\begin{proposition}
Assume that a space $X$ contains a dense open subspace of large inductive dimension $0$. If $\F$ is a closed-open ultrafilter on $X$ then $\F$ is a zero-cozero ultrafilter.    
\end{proposition}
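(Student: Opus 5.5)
The plan is to show that $\F$ is a zero-cozero \emph{filter}. The ultrafilter part then comes for free: a zero set is closed and a cozero set is open, and $\F$ is a closed-open ultrafilter. So for every zero set $Z$ either $Z\in\F$ or $X\setminus Z\in\F$, and the same holds for every cozero set. Fix a dense open $D\subseteq X$ with large inductive dimension $0$.

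\textbf{Step 1: $D\in\F$.} The set $X\setminus D$ is closed, so either $X\setminus D\in\F$ or $D\in\F$. In the first case, since $\F$ is an open filter, there is an open $V\in\F$ with $V\subseteq X\setminus D$. Such a $V$ is empty because $D$ is dense, which is impossible. Hence $D\in\F$.

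\textbf{Step 2: a clopen piece of $D$ in $\F$.} Fix $F\in\F$. Pick an open $U\in\F$ with $U\subseteq F$, and then a closed $G\in\F$ with $G\subseteq U$; this is possible because $\F$ is both open and closed. The sets $G\cap D$ and $D\setminus U$ are disjoint and closed in $D$. So there is a set $C$, clopen in $D$, with $G\cap D\subseteq C\subseteq U\cap D$ (if one of the two sets is empty, take $C=D$ or $C=\emptyset$). Since $G\cap D\in\F$, we get $C\in\F$. Also $C$ is open in $X$, because $D$ is open. A useful tool from here on is that an open ultrafilter is ``prime on open sets'': if $A,B$ are disjoint open sets with $A\cup B\in\F$, then exactly one of $A,B$ lies in $\F$. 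Indeed, either $A\in\F$, or $X\setminus A\in\F$ and then $B=(A\cup B)\setminus A\in\F$.

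\textbf{Step 3: getting zero and cozero sets of $X$ inside $C$.} The space $D$ is normal, since clopen separation is a fortiori Urysohn separation. By the argument of Proposition~\ref{pnormal}, the trace $\F{\restriction}D$ has a base of zero and cozero sets \emph{of $D$}. This trace is a closed-open ultrafilter on $D$: by Step 1, closed/open subsets of $D$ are traces of closed/open subsets of $X$ up to the nowhere dense set $X\setminus D$. The main obstacle is transferring these sets back to $X$, because a cozero or clopen subset of $D$ need not be cozero in $X$. The transfer I would try is as follows. For each $x\in C$, complete regularity of $X$ gives a cozero set $W_x\ni x$ of $X$ and a zero set $Z_x$ of $X$ with $x\in W_x\subseteq Z_x\subseteq C$. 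Using clopen separation inside $D$ together with the primeness from Step 2, I would split $C$ repeatedly into disjoint open pieces and use the ultrafilter to select the piece lying in $\F$. The goal is a single $W$ with $W\subseteq Z\subseteq C$ and $W\in\F$, where $W$ is cozero and $Z$ is a zero set of $X$. A natural candidate is $W=\bigcup$ of a maximal disjoint family of cozero sets of $X$ inside $C$ whose closures stay in $C$; this union is dense in $C$ and hence lies in $\F$ by \cite[Lemma~3.1]{BSZ}.

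The difficulty is that without ccc such a union need not be cozero. The extra input from large inductive dimension $0$ must therefore be used to make the selected pieces closed-and-open relative to $D$. This would allow gluing the local $\{0,1\}$-valued functions into one continuous function on $X$ that is $1$ on a member of $\F$ and $0$ outside $C$. That function then yields the required zero set $f^{-1}(1)$ and cozero set $f^{-1}((0,1])$ in $\F$ below $F$. This gluing step is where I expect the real work to lie.
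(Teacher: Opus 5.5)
\textbf{There is a genuine gap.} Your Steps 1 and 2 are fine, but Step 3 is not a proof. You leave the ``gluing'' unfinished yourself. The route you sketch also relies on complete regularity of $X$, which the proposition does not assume. And, as you note, a union of a maximal disjoint family of cozero sets need not be cozero without ccc. Nothing in your sketch explains how large inductive dimension $0$ of $D$ would repair this. So the argument never produces zero or cozero sets of $X$ inside a given $F\in\F$.

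The missing idea is small and makes Step 3 unnecessary: in Step 2, choose the open set so that its closure in $X$ already lies in $D$. Since $\F$ is a closed-open filter and $D\cap F\in\F$, proceed as follows:
\begin{enumerate}[(i)]
\item pick an open $O\in\F$ with $O\subseteq D\cap F$;
\item pick a closed $K\in\F$ with $K\subseteq O$;
\item pick an open $U\in\F$ with $U\subseteq K$, so that $\cl_X(U)\subseteq K\subseteq D\cap F$;
\item take a closed $G\in\F$ with $G\subseteq U$, and separate the disjoint closed subsets $G$ and $D\setminus U$ of $D$ by a set $C$ clopen in $D$ with $G\subseteq C\subseteq U$.
\end{enumerate}
As you observed, $C$ is open in $X$. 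Moreover $\cl_X(C)\subseteq\cl_X(U)\subseteq D$, so $\cl_X(C)=\cl_X(C)\cap D=\cl_D(C)=C$, i.e.\ $C$ is clopen in $X$. A clopen set is both a zero set and a cozero set, via its characteristic function. Since $C\in\F$ and $C\subseteq F$, $\F$ is a zero-cozero filter, and your opening remark finishes the proof. This is exactly the paper's argument; it needs no normality of $D$, no Tychonoff assumption on $X$, and no gluing.
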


\begin{proof}
Fix any element $W\in\F$. It suffices to find a clopen set $M\in \F$ such that $M\subseteq W$, as clopen sets are both zero and cozero. Let $D$ be a dense open subspace of $X$ with large inductive dimension $0$. Since $\F$ is an open ultrafilter, $D\in \F$. Since $\F$ is a closed-open filter, there exist an open set $V\in\F$ and a closed set $F\in \F$ such that $$F\subseteq V\subseteq \cl_X(V)\subseteq D\cap W.$$ 
Since $D$ has large inductive dimension $0$, there exists a clopen subset $M$ of $D$ such that $F\subseteq M\subseteq V$. Since $D$ is open in $X$, we get that $M$ is open in $X$ too. Since $M\subseteq \cl_X(V)\subseteq D$, we get that $M$ is closed in $X$. Hence $M$ is a clopen element of $\F$, as required.     
\end{proof}


A family $\mathcal A$ of subsets of a space $X$ is called {\em locally finite} if for each $x\in X$ there exists an open neighborhood of $x$ that intersects only finitely many members of $\mathcal A$.  
Recall that a space $X$ is called {\em feebly compact} if each locally finite family of nonempty open sets is finite. By~\cite[Theorem 3.10.22]{Eng}, a Tychonoff space is feebly compact if and only if it is pseudocompact. 
Taking into account Theorem \ref{main1}, the following proposition implies Theorem~\ref{pseudocompact}.

\begin{proposition}\label{feebly compact}
Let $\F$ be a closed-open ultrafilter on a space $X$ with no accumulation points. Then $X$ is not feebly compact.     
\end{proposition}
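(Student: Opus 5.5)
We aim to construct an infinite locally finite family of nonempty open sets. Since $\F$ is a closed filter with no accumulation points, for every $x\in X$ there is a closed $F_x\in\F$ with $x\notin F_x$. Because $\F$ is also an open filter, we can pick an open $U_x\in\F$ with $U_x\subseteq F_x$. Then $W_x:=X\setminus F_x$ is an open neighbourhood of $x$ disjoint from the element $U_x\in\F$.

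The plan is to build inductively a sequence of nonempty open sets $V_n$ and open sets $O_n\in\F$, with $V_n$ cut out of the ``remaining'' part of the filter. We use the ultrafilter property on closures. Since $\F$ is a closed ultrafilter, for any open set $G$ either the closed set $X\setminus G$ belongs to $\F$, or some element of $\F$ is contained in $G$, which then puts $G$ into $\F$. Since $\F$ is free of accumulation points, it contains no single point's every neighbourhood. The elements of $\F$ are nonempty and, indeed, infinite in the relevant sense.

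We proceed recursively. Start with $O_0=X$. At stage $n$ we have an open $O_n\in\F$. Choose an open $O_{n+1}\in\F$ with $\overline{O_{n+1}}\subseteq O_n$. This is possible: take closed $C\in\F$ and open $O\in\F$ with $O\subseteq C\subseteq O_n$, so $\overline O\subseteq O_n$, and shrink further so that $O_n\setminus\overline{O_{n+1}}\neq\emptyset$. The shrinking is possible because otherwise $\overline{O_{n+1}}=O_n$ is clopen, and we can remove a point neighbourhood $W_x$ with $x\in O_n$ while staying in $\F$. Put $V_n:=O_n\setminus\overline{O_{n+1}}$, a nonempty open set. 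The sets $V_n$ are pairwise disjoint.

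Local finiteness is the main obstacle. Fix $x$. If $x\notin\bigcap_n\overline{O_n}$, then some $X\setminus\overline{O_{m}}$ is a neighbourhood of $x$ meeting only $V_0,\dots,V_{m-1}$. To handle points of $\bigcap_n\overline{O_n}=\bigcap_n O_n$, we arrange the recursion along a well-chosen enumeration: at stage $n$ we additionally intersect $O_{n+1}$ with some $U_{x}\in\F$ disjoint from a neighbourhood $W_x$ of a chosen point. This alone handles only countably many points, so instead we will choose $O_{n+1}$ to be an element of $\F$. Here I expect to need the closed--open interplay more cleverly: for $x\in\bigcap O_n$ we take $W_x$ and $U_x$ as above. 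The family $\{V_n\}$ must then avoid $W_x$ eventually. If it does not, I would replace the sequence by the sets $V_n\cap U$ for suitable $U$. If this fails, I would switch to arguing by contradiction. Assume $X$ is feebly compact, and take a maximal disjoint family of open sets each disjoint from some element of $\F$. Its union $G$ cannot contain an element of $\F$ unless it is locally finite-ish, and its complement's closure being in $\F$ would yield an accumulation point by feeble compactness, using the standard fact that in a feebly compact space every decreasing sequence of nonempty open sets has $\bigcap_n\overline{O_n}\ne\emptyset$. That fact applied to $(O_n)$ gives a point $x\in\bigcap\overline{O_n}$, and the task reduces to showing that such an $x$ must be an accumulation point of $\F$, contradicting the hypothesis.
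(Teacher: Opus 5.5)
There is a genuine gap, and it is where you yourself locate the main obstacle. Your recursion works as far as it goes. It produces pairwise disjoint nonempty open sets $V_n=O_n\setminus\overline{O_{n+1}}$, and every point outside $\bigcap_n O_n$ has a neighbourhood meeting only finitely many $V_n$. But local finiteness at points of $\bigcap_n O_n$ is never proved: the enumeration trick covers only countably many points, as you note, and the patches (intersecting with $U_x$, passing to $V_n\cap U$) are never carried out.

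The fallback by contradiction does not close the gap either. Feeble compactness gives you some $x\in\bigcap_n\overline{O_n}$, and you then need $x$ to be an accumulation point of $\F$. Nothing supports this. The sets $(O_n)$ form an arbitrary countable subfamily of $\F$, whereas the hypothesis only gives, for this $x$, some element of $\F$ whose closure misses $x$, with no relation to the $O_n$. For example, take discrete $\omega$ with a free ultrafilter $\F$ containing a coinfinite set $A=\{a_0<a_1<\cdots\}$ with $0\notin A$. Your recursion allows $O_{n+1}=\{0\}\cup\{a_k:k\ge n\}$, and $0\in\bigcap_n O_n$ is not an accumulation point of $\F$. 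So the claimed reduction cannot follow from the construction. It could only hold vacuously, and proving it is the same as proving the proposition. The sentence about a maximal disjoint family whose union ``cannot contain an element of $\F$ unless it is locally finite-ish'' is also not an argument; in fact that union always belongs to $\F$.

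The missing idea is to use that maximal disjoint family directly, as the paper does, instead of a nested sequence. Let $\mathcal A$ be a maximal family of pairwise disjoint open sets not in $\F$, and put $G=\bigcup\mathcal A$.
\begin{enumerate}[(1)]
\item $G$ is dense. Every nonempty open $O$ contains a nonempty open set not in $\F$, namely $O\cap W_x$ for some $x\in O$, which is disjoint from $U_x\in\F$. So if $G$ were not dense, maximality would fail inside $X\setminus\overline G$.
\item $G\in\F$, because an open ultrafilter contains every dense open set. The closed-open property then gives an open $F\in\F$ with $\overline F\subseteq G$.
\item The family $\{F\cap A: A\in\mathcal A\}$ is locally finite. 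A point outside $G$ has the neighbourhood $X\setminus\overline F$, which misses every member. A point of $G$ lies in a single $A$, which meets at most one member.
\item The family has infinitely many nonempty members. If $F\subseteq A_1\cup\dots\cup A_k$ with all $A_i\notin\F$, then $X\setminus A_i\in\F$ for each $i$, since $\F$ is an open ultrafilter. But then $F\cap\bigcap_i(X\setminus A_i)=\emptyset$ would be an element of $\F$.
\end{enumerate}
Here local finiteness comes from the disjointness of $\mathcal A$ together with $\overline F\subseteq G$. No control over any $\bigcap_n O_n$ is ever needed.
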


\begin{proof}
By Zorn's Lemma, there exists a maximal family $\mathcal A$ consisting of open pairwise disjoint subsets of $X$ that are not in $\F$. Since the filter $\F$ has no accumulation points, the set $\bigcup\mathcal A$ is dense in $X$. As $\F$ is an open ultrafilter, $\bigcup\mathcal A\in\F$. Since $\F$ is a closed-open filter there exists an open set $F\in\F$ such that $\overline{F}\subseteq \bigcup\mathcal A$. We claim that the family $\mathcal A'=\{F\cap A: A\in\mathcal A\}$ is infinite and locally finite. Seeking a contradiction, assume that $\mathcal A'$ is finite. Then there exists a finite subfamily $\mathcal B$ of $\mathcal A$ such that $F\subseteq \bigcup \mathcal B$. Since $\F$ is an open ultrafilter and elements of $\mathcal B$ are pairwise disjoint, there exists $B\in \mathcal B\subseteq\mathcal A$ such that $B\in \F$, which contradicts the choice of $\mathcal A$. Hence $\mathcal A'$ is infinite. To show that $\mathcal A'$ is locally finite, fix any $x\in X$. If $x\notin\bigcup \mathcal A$, then $X\setminus \overline{F}$ is an open neighborhood of $x$ disjoint with $\bigcup \mathcal A'$. If $x\in \bigcup \mathcal A$, then there exists a unique $A\in \mathcal A$ that contains $x$. Note that $A$ is an open neighborhood of $x$ intersecting at most one element of $\mathcal A'$.   Hence $\mathcal A'$ is an infinite locally finite collection of nonempty open subsets, witnessing that $X$ is not feebly compact. 
\end{proof}

The following result implies that in some cases feeble compactness is equivalent to non-existence of a closed-open ultrafilter without accumulation points, and as such complements Proposition~\ref{feebly compact}.  

\begin{proposition}
Let $X$ be a space with a dense discrete subspace. Then the following assertions are equivalent:
\begin{enumerate}[\rm(a)]
    \item $X$ contains an infinite closed set consisting of isolated points;
    \item $X$ possesses a closed-open ultrafilter with no accumulation points.
    \end{enumerate}
\end{proposition}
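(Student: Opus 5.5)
(a)$\Rightarrow$(b). Let $D$ be the dense discrete subspace; since $D$ is dense and each $d\in D$ is isolated in $D$, each $d$ is isolated in $X$, so $D$ is open, and $D$ is exactly the set of isolated points of $X$. Let $A\subseteq D$ be infinite and closed in $X$ (each point of $A$ is isolated, so $A\subseteq D$). Choose a free ultrafilter $p$ on $A$ and set
$$\F=\{F\subseteq X:\ F\supseteq B \hbox{ for some } B\in p\}.$$
Every subset $B\subseteq A$ is open in $X$, being a union of isolated points. It is also closed in $X$, since any accumulation point of $B$ would be an accumulation point of the closed set $A$ with no isolated point of its own status contradiction; more carefully, $\overline B\subseteq A$ and each point of $A\setminus B$ is isolated, hence not in $\overline B$. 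Thus $\F$ has a base of clopen sets and is a closed-open filter.

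For maximality, take any set $S$ (open or closed). Either $S\cap A\in p$, giving $S\in\F$, or $A\setminus S\in p$, giving an element of $\F$ disjoint from $S$. Hence $\F$ is an $R$ ultrafilter for every property $R$; in particular it is a closed-open ultrafilter.

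To see that $\F$ has no accumulation points, consider $x\in X$. If $x\notin A$, then $A\in\F$ is closed and misses $x$. If $x\in A$, then $A\setminus\{x\}\in\F$, because $p$ is free, and this set is closed and misses $x$.

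(b)$\Rightarrow$(a). Let $\F$ be a closed-open ultrafilter with no accumulation points.

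First, $D\in\F$. Since $D$ is dense and open, $X\setminus D$ is closed nowhere dense. If $X\setminus D\in\F$, then $\F$ would contain some open $U\subseteq X\setminus D$ (using that $\F$ is an open filter), which forces $U=\emptyset$, impossible. So, $\F$ being a closed ultrafilter, some $F\in\F$ misses $X\setminus D$, whence $D\in\F$.

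Next, since $\F$ is a closed filter, there is a closed $C\in\F$ with $C\subseteq D$. This $C$ is a closed set consisting of isolated points. It is infinite: if $C$ were finite, then applying the ultrafilter property to the finitely many clopen singletons would give $\{c\}\in\F$ for some $c\in C$, and then $c$ would be an accumulation point, contradicting the hypothesis.

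The main point needing care is the step $D\in\F$, which uses both the open base (to exclude the nowhere dense set) and the closed ultrafilter property. Everything else is direct.
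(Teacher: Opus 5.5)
Your proof is correct and follows the paper's argument. For (a)$\Rightarrow$(b) you let a free ultrafilter on the infinite closed discrete set generate the filter, and for (b)$\Rightarrow$(a) you show $D\in\F$, take a closed member of $\F$ inside $D$, and rule out finiteness using the absence of accumulation points, with more detail than the paper gives. As in the paper, the claim that points of the dense discrete subspace $D$ are isolated in $X$ tacitly uses a separation axiom such as $T_1$; without it, for example on $\omega\times\{0,1\}$ whose open sets are exactly the sets $B\times\{0,1\}$, both that claim and the proposition itself fail.
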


\begin{proof}
(a)$\Rightarrow$(b) Let $D$ be an infinite closed set consisting of isolated points. Then each nonprincipal ultrafilter $u$ on $D$ generates a closed-open ultrafilter on $X$ with no accumulation points. 


(b)$\Rightarrow$(a) Let $\F$ be a closed-open ultrafilter on $X$ with no accumulation points and $D$ be a dense discrete subset of $X$. It follows that $D$ consists of isolated points and thus is open. Hence $D\in \F$. Since $\F$ is a closed filter, there exists a closed set $F\in \F$ such that $F\subseteq D$. Then the closed set $F$ consists of isolated points. Since the filter $\F$ has no accumulation points, the set $F$ is infinite.
\end{proof}

Further we investigate a relation between the usual set-theoretic ultrafilters on a given space and the ``topological'' ultrafilters discussed above. 
 Let $u$ be an ultrafilter on a space $X$. For a property $R$ of subsets of $X$, let $R(u)$ be the $R$ filter on $X$ generated by the elements of $u$ with property $R$. The filters of the form $R(u)$ are called prime. For more information about prime filters we refer the reader to \cite{C85, C87,C89,ZF72,M95,M98}.
The next proposition shows that for each ultrafilter $u$ on a space $X$, the filter $R(u)$ is somewhat close to being an $R$ ultrafilter. 
The~equivalence of (a) and (b) in the~next theorem has been shown by Frol\'ik~\cite{ZF72} for closed filters.
\begin{proposition}\label{prime-char}
    Let $\F$ be an~$R$~filter on a space $X$. Then the following are equivalent:
    \begin{enumerate}[\rm(a)]
   \item Let $H_1,\ldots H_n$ be subsets of $X$ with property $R$ such that $\bigcup_{i\leq n}H_i\in\F$. Then $H_i\in\F$ for some $i\leq n$;
    \item There is an~ultrafilter~$u$ on~$X$ such that $\F=R(u)$;
    \item Let $\F_1,\cdots,\F_n$ be $R$~filters, where $\F_i\neq \F$ for all $i\leq n$. Then $\F\neq \bigcap_{i\leq n}\F_i$.
    \end{enumerate}
\end{proposition}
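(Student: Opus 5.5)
I would prove the cycle (a)$\Rightarrow$(b)$\Rightarrow$(a) and separately (a)$\Leftrightarrow$(c). Throughout, $R(u)$ denotes the filter generated by the elements of $u$ with property $R$. The property $R$ need not be preserved under finite unions or intersections, so I will avoid using that.

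\emph{(b)$\Rightarrow$(a).} This is the easiest direction. Let $u$ be an ultrafilter with $\F=R(u)$. Suppose $H_1,\ldots,H_n$ have property $R$ and $\bigcup_{i\le n}H_i\in\F$. Since $\F\subseteq u$, the union lies in $u$. Because $u$ is an ultrafilter, some $H_i\in u$. As $H_i$ has property $R$, it is a generator of $R(u)=\F$, so $H_i\in\F$.

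\emph{(a)$\Rightarrow$(b).} I would take an ultrafilter $u\supseteq\F$ that avoids every $R$-set not in $\F$. To do this, consider the family
\[
\F\cup\{X\setminus H: H \text{ has property } R,\ H\notin\F\}.
\]
I claim it has the finite intersection property. If not, there are $F\in\F$ and $R$-sets $H_1,\ldots,H_n\notin\F$ with $F\subseteq\bigcup_i H_i$. Then $\bigcup_i H_i\in\F$, so by (a) some $H_i\in\F$, a contradiction. Now extend this family to an ultrafilter $u$. Then every $R$-element of $u$ lies in $\F$, which gives $R(u)\subseteq\F$. Conversely, $\F$ has a base of $R$-sets, and each of them belongs to $u$, so $\F\subseteq R(u)$. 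Hence $\F=R(u)$.

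\emph{(a)$\Rightarrow$(c).} Suppose $\F=\bigcap_{i\le n}\F_i$ with each $\F_i\ne\F$ an $R$ filter. Since $\F\subseteq\F_i$ and the two are distinct, there is $G_i\in\F_i\setminus\F$. Because $\F_i$ is an $R$ filter, I may shrink $G_i$ to an $R$-set $H_i\in\F_i$; since $H_i\subseteq G_i$ and $G_i\notin\F$, also $H_i\notin\F$. The union $\bigcup_j H_j$ belongs to every $\F_i$, since it contains $H_i$. Therefore $\bigcup_j H_j\in\F$, and (a) gives some $H_j\in\F$, a contradiction.

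\emph{(c)$\Rightarrow$(a).} This is the step I expect to be the main obstacle. Suppose $H_1,\ldots,H_n$ are $R$-sets with $\bigcup H_i\in\F$ but no $H_i\in\F$. Let $\F_i$ be the filter generated by $\F\cup\{H_i\}$. The difficulty is that $\F_i$ must be an $R$ filter, but $F\cap H_i$ need not have property $R$.

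My fix is to use the $R$-base of $\F$ and to take the $\F_i$ to be the $R$ filters determined by the ultrafilters extending $\F\cup\{H_i\}$. Concretely, for each $i$ the family $\F\cup\{H_i\}$ has the finite intersection property. Indeed, if $F\cap H_i=\emptyset$ with $F\in\F$, then $F\cap\bigcup_j H_j\subseteq\bigcup_{j\ne i}H_j$, and one can argue inductively on $n$ (dropping $H_i$). For each ultrafilter $v$ containing $\F\cup\{H_i\}$, the filter $R(v)$ is an $R$ filter that contains $\F$ and $H_i$, so $R(v)\ne\F$.

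Next I show $\F=\bigcap_v R(v)$, where $v$ ranges over ultrafilters extending $\F\cup\{H_j\}$ for some $j$. Take $G\notin\F$; shrinking via the base, I may assume $G$ is an $R$-set. Suppose $G\in R(v)$ for all such $v$. Then by compactness of the Stone space, finitely many $R$-sets of the form $H_j\cap F$ are covered appropriately, and $G$ together with $\F$ would already force $G\in\F$.

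This intersection is infinite, whereas (c) concerns finitely many filters. To reduce it to a finite intersection, I would use compactness of $\beta$ of the discrete set, which lets me pass to finitely many ultrafilters. Making this finitization rigorous is the delicate point. If it resists, I would instead take $\F_i=R$-filter generated by the $R$-sets containing some $F\cap H_i$; then every $G\in\bigcap_i\F_i$ contains $\bigcup_i(F_i\cap H_i)\supseteq F\cap\bigcup_i H_i$, which lies in $\F$. This gives $\F=\bigcap_i\F_i$ with each $\F_i\ne\F$, contradicting (c).
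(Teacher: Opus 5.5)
\textbf{Verdict: there is a genuine gap in (c)$\Rightarrow$(a).} Your directions (a)$\Rightarrow$(b), (b)$\Rightarrow$(a) and (a)$\Rightarrow$(c) are correct. The first is the paper's argument. The other two replace the paper's chain (b)$\Rightarrow$(c)$\Rightarrow$(a) and hold for an arbitrary property $R$.

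Your main route for (c)$\Rightarrow$(a) uses ultrafilters $v\supseteq\F\cup\{H_i\}$, an infinite intersection of the $R(v)$, and then a ``finitization'' by compactness. You never complete it, and in the generality you insist on (no closure of $R$ under intersections) it cannot be completed, because there the implication can fail. Here is a counterexample:
\begin{itemize}
\item Take $X=\{a,b,c,d\}$, let the only $R$-sets be $B=\{a,b,c\}$, $H_1=\{a,d\}$, $H_2=\{b,c\}$, and put $\F=\{B,X\}$.
\item A filter with an $R$-base containing both $B$ and $H_1$ would need an $R$-set inside $\{a\}$. Since $H_1\cap H_2=\emptyset$, the only $R$ filters are therefore $\langle B\rangle=\F$, $\langle H_1\rangle$ and $\langle H_2\rangle$.
\item No intersection of copies of $\langle H_1\rangle$ and $\langle H_2\rangle$ equals $\F$, since $\langle H_1\rangle\cap\langle H_2\rangle=\{X\}$. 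So (c) holds.
\item Yet $H_1\cup H_2=X\in\F$ while $H_1,H_2\notin\F$, so (a) fails.
\end{itemize}
The same example refutes your intermediate claim that $R(v)$ is an $R$ filter. Take $v$ principal at $a$: its $R$-elements $B$ and $H_1$ generate the filter of supersets of $\{a\}$, which has no $R$-base.

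\textbf{The missing hypothesis.} You must use that $R$ is closed under finite intersections. The paper tacitly assumes this: it speaks of ``the $R$ filter generated by $\F\cup\{A_i\}$'', and all properties it actually uses are of this kind. Under that hypothesis your fallback becomes the paper's proof, but it needs the following steps spelled out.
\begin{enumerate}
\item Pass to a minimal witness family. Equivalently, use the paper's index set $M$ of those $H_i$ that meet every member of $\F$, and observe that $\bigcup_{i\in M}H_i\in\F$ still holds. This reduction is needed: your assertion that $\F\cup\{H_i\}$ has the finite intersection property ``for each $i$'' is false for the original family and only becomes true after it.
\item Let $\F_i$ be the filter generated by $\F\cup\{H_i\}$. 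It is proper, it has the $R$-base $\{F\cap H_i: F\in\F \text{ has property } R\}$ (this is exactly where closure under intersections is used), it contains $\F$, and $H_i\in\F_i\setminus\F$.
\item Finally, $\bigcap_i\F_i=\F$ by your own computation $G\supseteq\bigcup_i(F_i\cap H_i)\supseteq\bigl(\bigcap_iF_i\bigr)\cap\bigcup_iH_i\in\F$.
\end{enumerate}
No compactness or finitization argument is needed.
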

\begin{proof}
    (a)$\Rightarrow$(b)  
    Let $$\E=\{E\subseteq X: E\notin\F, E \hbox{ has property }R \hbox{ and } E\cap F\neq \emptyset \hbox{ for all }F\in\F\}.$$ 
    Let us show that the~family $\Theta=\F\cup\set{X\setminus E}{E\in\E}$ has the finite intersection property.
      Seeking a contradiction, assume that there exist sets $E_i\in\E$, $i\leq n$ and $F\in\F$ satisfying $$\bigcap_{i\leq n}(X\setminus E_i)\cap F=(X\setminus \bigcup_{i\leq n}E_i)\cap F=\emptyset.$$ Thus $F\subseteq\bigcup_{i\leq n}E_i$, and so $\bigcup_{i\leq n}E_i\in\F$. By the assumption there is $i\leq n$ such that $E_i\in\F$. But this contradicts the equality $\E\cap\F=\emptyset$. Hence the~family $\Theta$ has the~finite intersection property. Pick any ultrafilter $u$ that contains $\Theta$. Since $\F\subseteq \Theta\subseteq u$ and $\F$ is an $R$ filter, we obtain $\F\subseteq R(u)$. In order to show the converse inclusion, fix any element $C\in R(u)$ with property $R$. Observe that $C\notin \E$, as otherwise, $X\setminus C\in\Theta\subseteq u$, which would yield a contradiction. 
     On the other hand, since $C\in u$ and $\F\subseteq u$, we get that $C\cap F\neq \emptyset$ for each $F\in\F$. The definition of $\E$ implies that $C\in\F$. Thus $\F=R(u)$, as required.
    \par
    (b)$\Rightarrow$(c) Seeking a~contradiction, assume that there exist $R$ filters $\F_i$, $i\leq n$ such that all of them are distinct from $\F$, and $R(u)=\F=\bigcap_{i\leq n}\F_i$. Then for each $i\leq n$ there exist a subset $A_i\in\F_i\setminus R(u)$ with property $R$. It follows that $A_i\notin u$ for all $i\leq n$. Observe that 
    $$\bigcup_{i\leq n} A_i\in\bigcap_{i\leq n}\F_i=R(u)\subseteq u.$$ Since $u$ is an ultrafilter, there exists $i\leq n$ such that $A_i\in u$, a contradiction. 
    \par

   We prove (c)$\Rightarrow$(a) by contraposition. That is we actually prove $\neg$(a)$\Rightarrow\neg$(c). Suppose that $\F$ does not satisfy (a). Then there exist subsets $A_i$, $i\leq n$ of $X$ with property $R$ such that $\bigcup_{i\leq n}A_i\in\F$ and $A_i\notin \F$ for all $i\leq n$. Let $M=\{i\leq n: A_i\cap F\neq \emptyset \hbox{ for all } F\in\F\}$. It is easy to check that $\bigcup_{i\in M}A_i\in \F$. For each $i\in M$ let $\F_i$ be the $R$ filter generated by $\F\cup\{A_i\}$. The sets $A_i$, $i\in M$ witness that the filters $\F_i$, $i\in M$ are distinct from~$\F$. Observe that the filter $\bigcap_{i\in M}\F_i$ is generated by the family 
   $\mathcal B=\{F\cap\bigcup_{i\in M}A_i: F\in\F\}$,
which is a base for $\F$. Hence $\F=\bigcap_{i\in M}\F_i$, as required.
\end{proof}
In the following we explore conditions guaranteeing that the filter $R(u)$ is an ultrafilter in a~topological sense. Recall that for a given property $R$ a subset $A$ of a space $X$ is said to have property $R^c$ if $X\setminus A$ has property $R$.

\begin{proposition}\label{R(u)}
Let $u$ be an ultrafilter on a space $X$. 
Then $R(u)$ is an $R$-$R^c$ ultrafilter if and only if $R(u)=R^c(u)$. 
\end{proposition}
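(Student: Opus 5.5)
Both directions unwind the definitions, using the ultrafilter $u$ as a go-between. Throughout, $R(u)$ is an $R$ filter contained in $u$ by construction, and likewise $R^c(u)\subseteq u$ is an $R^c$ filter.

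($\Leftarrow$) Assume $R(u)=R^c(u)$. Then $\F:=R(u)$ is both an $R$ filter and an $R^c$ filter, so only the ultrafilter conditions need checking. Take $A$ with property $R$. Since $u$ is an ultrafilter, either $A\in u$ or $X\setminus A\in u$. If $A\in u$, then $A$ is an element of $u$ with property $R$, so $A\in R(u)$. If $X\setminus A\in u$, then $X\setminus A$ is an element of $u$ with property $R^c$, so $X\setminus A\in R^c(u)=R(u)$. Hence $R(u)$ is an $R$ ultrafilter. The same argument with the roles of $R$ and $R^c$ swapped (note $(R^c)^c=R$) shows that $R^c(u)=R(u)$ is an $R^c$ ultrafilter.

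($\Rightarrow$) Assume $\F:=R(u)$ is an $R$-$R^c$ ultrafilter. To show $R^c(u)\subseteq R(u)$, it suffices to check the generators. Let $B\in u$ have property $R^c$. Since $\F$ is an $R^c$ ultrafilter, either $B\in\F$ or $X\setminus B\in\F$. The second option is impossible, because $\F\subseteq u$ and $B\in u$. So $B\in R(u)$.

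For the reverse inclusion $R(u)\subseteq R^c(u)$, let $A\in u$ have property $R$. Then $A\in\F$. Since $\F$ is an $R^c$ filter, there is $B\in\F$ with property $R^c$ and $B\subseteq A$. Because $\F\subseteq u$, we have $B\in u$, so $B$ is a generator of $R^c(u)$. Hence $A\in R^c(u)$.

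Everything here is routine. The one point needing care is the reverse inclusion, which uses that $R(u)$ has a base of $R^c$ sets, and not merely that it is an $R^c$ ultrafilter.
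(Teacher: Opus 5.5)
Your proof is correct and follows the paper's argument. The ($\Leftarrow$) direction is the same. In ($\Rightarrow$), the paper obtains $R(u)\subseteq R^c(u)$ exactly as you do, then deduces equality from the maximality of the $R^c$ ultrafilter $R(u)$ among $R^c$ filters; your direct check that $R^c(u)\subseteq R(u)$ is just that maximality step written out.
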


\begin{proof}
($\Rightarrow$) Since $R(u)$ has a base consisting of elements of $u$ with property $R^c$, we get $R(u)\subseteq R^c(u)$. Since $R(u)$ is an $R^c$ ultrafilter, it is easy to check that $R(u)$ is maximal with respect to the inclusion among $R^c$~filters. Hence $R(u)=R^c(u)$, as required.

($\Leftarrow$) Since $R(u)=R^c(u)$, we get that $R(u)$ is an $R$-$R^c$ filter. Fix a subset $A\subseteq X$ with property $R$ such that $A\notin R(u)$. Then $A\notin u$. Since $u$ is an ultrafilter, we get $X\setminus A\in u$. Then $X\setminus A\in R^c(u)$, as the set $X\setminus A$ has property $R^c$. Since $R(u)=R^c(u)$, we get that $X\setminus A\in R(u)$. Hence $R(u)$ is an $R$ ultrafilter. One can symmetrically show that $R(u)$ is an $R^c$ ultrafilter.   
\end{proof}




\begin{corollary}\label{blabla}
Let $u$ be an ultrafilter on $X$. Then the following holds:
\begin{enumerate}[\rm(1)]
\item $\uopen{u}$ is a closed-open ultrafilter if and only if $\uopen{u}=\uclosed{u}$.
\item $\uzero{u}$ is a zero-cozero ultrafilter if and only if $\uzero{u}=\ucozero{u}$.
\item $\uopen{u}$ is a closed-open-zero-cozero ultrafilter if and only if $\uopen{u}=\uclosed{u}=\uzero{u}=\ucozero{u}$. 
\end{enumerate}
\end{corollary}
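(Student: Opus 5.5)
Items (1) and (2) follow directly from Proposition~\ref{R(u)}, applied to suitable properties $R$. For (1), take $R$ to be ``open'', so that $R^c$ is ``closed'', $R(u)=\uopen{u}$ and $R^c(u)=\uclosed{u}$. Proposition~\ref{R(u)} then says that $\uopen{u}$ is an open-closed ultrafilter if and only if $\uopen{u}=\uclosed{u}$, which is exactly (1). For (2), take $R$ to be ``zero'', so that $R^c$ is ``cozero'', $R(u)=\uzero{u}$ and $R^c(u)=\ucozero{u}$. The same proposition gives (2) verbatim.

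For (3), I would argue in two directions.

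Assume first that $\uopen{u}$ is a closed-open-zero-cozero ultrafilter. By (1), $\uopen{u}=\uclosed{u}$. Next I show $\uopen{u}=\uzero{u}$.
\begin{enumerate}[\rm(i)]
\item $\uopen{u}$ has a base of zero sets, and each such basic set lies in $\uopen{u}\subseteq u$. So every basic element contains a zero element of $u$, which gives $\uopen{u}\subseteq\uzero{u}$.
\item Let $Z\in u$ be a zero set. If $Z\notin\uopen{u}$, then, since $\uopen{u}$ is a zero ultrafilter, $X\setminus Z\in\uopen{u}\subseteq u$. This contradicts $Z\in u$. Hence $Z\in \uopen{u}$, so $\uzero{u}\subseteq\uopen{u}$.
\end{enumerate}
This inclusion argument is the one step needing care. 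It works because each element of $u$ with property $R$ is either in the ultrafilter or its complement is.

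Having $\uopen{u}=\uzero{u}$, $\uzero{u}$ is a zero-cozero ultrafilter, and (2) then yields $\uzero{u}=\ucozero{u}$. So all four filters coincide.

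Conversely, assume $\uopen{u}=\uclosed{u}=\uzero{u}=\ucozero{u}$. By (1), $\uopen{u}$ is a closed-open ultrafilter. By (2), $\uzero{u}=\uopen{u}$ is a zero-cozero ultrafilter. Together these say $\uopen{u}$ is a closed-open-zero-cozero ultrafilter.

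I expect no real obstacle: the only nontrivial point is the double inclusion in (i)--(ii), which mirrors the proof of Proposition~\ref{R(u)}.
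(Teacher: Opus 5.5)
Your proposal is correct and follows essentially the same route as the paper: (1) and (2) come from Proposition~\ref{R(u)}, and (3) is obtained from (1), (2) and a base argument giving $\uopen{u}\subseteq\uzero{u}$. The only difference is minor. You get $\uzero{u}\subseteq\uopen{u}$ from the zero-ultrafilter property and then $\uzero{u}=\ucozero{u}$ from item (2), whereas the paper uses the trivial inclusions $\uzero{u}\subseteq\uclosed{u}=\uopen{u}$ and $\ucozero{u}\subseteq\uopen{u}$ (zero sets are closed, cozero sets are open) together with the cozero base of $\uopen{u}$.
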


\begin{proof}
Assertions (1) and (2) are immediate consequences of Proposition~\ref{R(u)}.

(3) ($\Leftarrow$) Items (1) and (2) imply that $\uopen{u}$ is a closed-open ultrafilter, and  $\uzero{u}$ is a zero-cozero ultrafilter. By the assumption, $\uopen{u}=\uzero{u}$. Hence $\uopen{u}$ is a closed-open-zero-cozero ultrafilter.

($\Rightarrow$) Assertion (1) implies that $\uopen{u}=\uclosed{u}$. Since each cozero set is open and each zero set is closed, we get that $\ucozero{u}\subseteq \uopen{u}$ and $\uzero{u}\subseteq \uclosed{u}$. Since $\uopen{u}$ is a cozero filter, $\ucozero{u}=\uopen{u}$. It remains to show that $\uclosed{u}\subseteq \uzero{u}$. Since $\uopen{u}=\uclosed{u}$ it suffices to show that $\uopen{u}\subseteq \uzero{u}$. By the assumption,  $\uopen{u}$ has a base consisting of zero sets which are elements of the ultrafilter $u$.  
Thus $\uopen{u}\subseteq \uzero{u}$, and $\uopen{u}=\uclosed{u}=\uzero{u}=\ucozero{u}$, as required.
\end{proof}

\begin{lemma}\label{trivial1}
Let $u$ be a non-principal ultrafilter on a $T_1$ space $X$. If $\uopen{u}$ is a closed-open filter, then $\uopen{u}$ has no accumulation points.
\end{lemma}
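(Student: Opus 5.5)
Fix an arbitrary point $x\in X$; it suffices to produce an element of $\uopen{u}$ whose closure misses $x$. Since $u$ is non-principal, $\{x\}\notin u$, so $X\setminus\{x\}\in u$. Because $X$ is $T_1$, the singleton $\{x\}$ is closed and hence $X\setminus\{x\}$ is an open element of $u$. It therefore belongs to $\uopen{u}$ by definition of $\uopen{u}$.

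Next I use that $\uopen{u}$ is a closed filter, i.e.\ it has a base of closed sets. Thus there is a closed set $F\in\uopen{u}$ with $F\subseteq X\setminus\{x\}$. Since $F$ is closed, $\overline{F}=F$, and so $x\notin\overline{F}$. Consequently $x\notin\bigcap_{G\in\uopen{u}}\overline{G}$, so $x$ is not an accumulation point of $\uopen{u}$. As $x$ was arbitrary, $\uopen{u}$ has no accumulation points.

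There is no real obstacle here. The only points to check are that the open set $X\setminus\{x\}$ actually lies in $\uopen{u}$, which uses both non-principality and the $T_1$ axiom, and that the closed base is used to shrink this set to a closed member of the filter. Note that the openness part of the hypothesis is not needed beyond the definition of $\uopen{u}$ itself.
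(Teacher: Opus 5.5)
Your proof is correct and follows the paper's argument essentially verbatim: $X\setminus\{x\}$ is an open member of $u$ (by non-principality and $T_1$), hence lies in $\uopen{u}$, and the closed base yields a closed $F\in\uopen{u}$ avoiding $x$. The paper phrases the last step via the open neighborhood $X\setminus F$ of $x$, which is the same as your observation that $x\notin\overline{F}=F$.
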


\begin{proof}
 Fix any $x\in X$. Since $u$ is not principal, we get that $A=X\setminus\{x\}\in u$. Since the space $X$ is $T_1$, $A\in\uopen{u}$. As $\uopen{u}$ is a closed filter, there exists a closed set $B\in \uopen{u}$ such that $B\subseteq A$. It is clear that the open neighborhood $X\setminus B$ of $x$ witnesses that $x$ is not an accumulation point of $\uopen{u}$. Since the point $x$ was chosen arbitrarily, the filter $\uopen{u}$ has no accumulation points.     
 \end{proof}

\begin{lemma}\label{trivial2}
Let $\F$ be an $R$ ultrafilter on $X$. Then for every ultrafilter $u$ that contains $\F$, we have $R(u)=\F$.
\end{lemma}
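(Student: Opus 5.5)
We will prove the two inclusions $\F\subseteq R(u)$ and $R(u)\subseteq\F$ directly from the definitions. Only two facts are needed: $R(u)$ is the filter generated by the members of $u$ having property $R$, and an $R$ ultrafilter decides every set with property $R$.

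For $\F\subseteq R(u)$, take any $F\in\F$. Since $\F$ is an $R$ filter, it has a base of sets with property $R$, so there is $B\in\F$ with property $R$ and $B\subseteq F$. Because $\F\subseteq u$, we have $B\in u$. Thus $B$ is an element of $u$ with property $R$, so $B\in R(u)$. As $R(u)$ is closed under supersets, $F\in R(u)$.

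For $R(u)\subseteq\F$, the filter $R(u)$ is generated by the elements of $u$ with property $R$. Since $\F$ is upward closed, it suffices to show that each such generator $A$ lies in $\F$. Suppose $A\in u$ has property $R$ and $A\notin\F$. By the definition of an $R$ ultrafilter, $X\setminus A\in\F\subseteq u$. Then both $A$ and $X\setminus A$ belong to $u$, which contradicts $\emptyset\notin u$. Hence $A\in\F$, and therefore $R(u)\subseteq\F$.

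Both inclusions are elementary. The only point requiring care is the second one: we must use the dichotomy in the definition of an $R$ ultrafilter ($A\in\F$ or $X\setminus A\in\F$ for every $A$ with property $R$). Mere maximality of $\F$ among $R$ filters would not be enough here unless $R$ is preserved under finite intersections.
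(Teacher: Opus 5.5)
Your proof is correct and is essentially the paper's argument: the paper notes $\F\subseteq R(u)$ and then uses that an $R$ ultrafilter is maximal among $R$ filters, and checking that maximality is exactly your dichotomy step, applied there to the $R$ filter $R(u)$. Because you work directly with the generators of $R(u)$, you do not need $R(u)$ to have a base of $R$ sets in advance. For this you use that $\F$ is closed under finite intersections as well as supersets. The paper gets the base condition for free, since open, closed, zero and cozero sets are closed under finite intersections.
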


\begin{proof}
It is clear that $\F\subseteq R(u)$. It is easy to see that $\F$ is maximal with respect to the inclusion among $R$ filters. Thus $R(u)=\F$, as required.
\end{proof}

\section{Measures on topological spaces}

In this section, we show how the topological filters investigated in the previous section are connected to certain measures. 
Recall that $\B(X)$ denotes the family 
$$\{(U\setminus P)\cup Q\colon U \hbox{ is open in } X \hbox{ and } P,Q \hbox{ are nowhere dense in }X\}.$$

The following lemma is folklore. Nevertheless we include its short proof.

\begin{lema}[Folklore]\label{algebraclear}
For every space $X$, $\B(X)$ is an algebra of subsets of $X$. Moreover,  $\B(X)$ is the smallest algebra containing all open and nowhere dense subsets of $X$. 
\end{lema}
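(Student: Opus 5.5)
The plan is to verify closure under complements and finite unions directly, and then the minimality. I will use two standard facts throughout. Nowhere dense sets form an ideal: they are closed under subsets and finite unions. For every open $U$ the boundary $\overline{U}\setminus U$ is nowhere dense (it is closed with empty interior).

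First, $\B(X)$ contains $\emptyset$ and $X$, taking $U=\emptyset$ or $U=X$ and $P=Q=\emptyset$.

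For finite unions, take $A_i=(U_i\setminus P_i)\cup Q_i$ for $i=1,2$. I will show that
\[A_1\cup A_2=\bigl((U_1\cup U_2)\setminus P\bigr)\cup Q\]
with $P=P_1\cup P_2$ and $Q=Q_1\cup Q_2\cup\bigl((U_1\cup U_2)\cap(P_1\cup P_2)\bigr)$, both nowhere dense. Rather than grinding the identity, I will use a cleaner criterion. A set $A$ lies in $\B(X)$ iff there is an open $U$ with $A\triangle U$ nowhere dense. Indeed, if $A\triangle U=N$ is nowhere dense, then $A=(U\setminus (U\setminus A))\cup (A\setminus U)$ with both pieces inside $N$. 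Conversely, $((U\setminus P)\cup Q)\triangle U\subseteq P\cup Q$.

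With this criterion, unions are immediate, since $(A_1\cup A_2)\triangle(U_1\cup U_2)\subseteq (A_1\triangle U_1)\cup(A_2\triangle U_2)$.

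Complements are the main step. Given $A\triangle U=N$ nowhere dense, set $V=X\setminus\overline{U}$, which is open. Then
\[(X\setminus A)\triangle V\subseteq \bigl((X\setminus A)\triangle(X\setminus U)\bigr)\cup\bigl((X\setminus U)\triangle V\bigr)=N\cup(\overline{U}\setminus U),\]
which is nowhere dense. So $X\setminus A\in\B(X)$. Intersections then follow by De Morgan, and $\B(X)$ is an algebra.

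For minimality, $\B(X)$ contains every open $U$ (take $P=Q=\emptyset$) and every nowhere dense $Q$ (take $U=\emptyset$). Conversely, any algebra containing all open and all nowhere dense sets contains $U\setminus P=U\cap(X\setminus P)$ and then $(U\setminus P)\cup Q$, by closure under complements, intersections and unions. Hence $\B(X)$ is the smallest such algebra.

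The only non-formal point is the complement step, which hinges on replacing $X\setminus U$ by the open set $X\setminus\overline{U}$ at the cost of the nowhere dense boundary.
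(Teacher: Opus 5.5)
Your proof is correct. Its one substantive step is the same as the paper's. For complements, the paper writes $X\setminus A$ as $\Int(X\setminus U_A)\setminus Q_A$ together with a nowhere dense remainder built from $P_A\setminus Q_A$ and $(X\setminus U_A)\setminus\Int(X\setminus U_A)$. Since $\Int(X\setminus U_A)=X\setminus\overline{U_A}$ and that boundary piece is $\overline{U_A}\setminus U_A$, this is exactly your trade of $X\setminus U$ for $X\setminus\overline{U}$ at the cost of the boundary.

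The difference is organizational. You first prove that $A\in\B(X)$ iff $A\triangle U$ is nowhere dense for some open $U$, i.e. $\B(X)$ consists of the sets that are open modulo the ideal of nowhere dense sets. After that, unions and complements reduce to one-line symmetric-difference estimates. The paper instead exhibits explicit representations, such as $A\cup B=((U_A\cup U_B)\setminus P)\cup Q$ with $P=(P_A\setminus B)\cup(P_B\setminus A)$ and $Q=Q_A\cup Q_B$, and this needs a small element-chasing check. Your route is shorter and more conceptual, and the criterion is reusable. The paper's route produces concrete witnesses $U,P,Q$.

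One correction: the union identity you announce before switching to the criterion is false as written. Your $Q$ contains $(U_1\cup U_2)\cap(P_1\cup P_2)$, so the right-hand side collapses to $U_1\cup U_2\cup Q_1\cup Q_2$. For instance, take $X=\mathbb{R}$ with $U_1=(0,1)$, $P_1=\{1/2\}$, $Q_1=\emptyset$ and $A_2=\emptyset$. The right-hand side is then $(0,1)$, not $A_1=(0,1)\setminus\{1/2\}$. Intersecting that last term of $Q$ with $A_1\cup A_2$ would repair it. Your actual argument never uses this identity, so it is a stray misstatement rather than a gap, but it should be deleted.
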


\begin{proof}
 It is clear that $\{\emptyset, X\}\subset \B(X)$. Fix any $A,B\in\B(X)$. Then there exist open sets $U_A, U_B$ and nowhere dense sets $P_A,Q_A, P_B,Q_B$ such that $A=(U_A\setminus P_A)\cup Q_A$ and $B=(U_B\setminus P_B)\cup Q_B$. 
 Then the sets $$P:=(P_A\setminus B)\cup(P_B\setminus A) \quad\hbox{
 and }\quad Q:=Q_A\cup Q_B$$ are nowhere dense, and $A\cup B=((U_A\cup U_B)\setminus P)\cup Q\in\B(X)$. Since
 \[
 X\setminus A=\big(\Int(X\setminus U_A)\setminus Q_A\big)\cup\big(((X\setminus U_A)\setminus\Int(X\setminus U_A))\setminus Q_A\big)\cup(P_A\setminus Q_A),
 \]
and $((X\setminus U_A)\setminus\Int(X\setminus U_A))\setminus Q_A\cup(P_A\setminus Q_A)$ is a nowhere dense subset of $X$, we get $X\setminus A\in\B(X)$.

Consider an algebra $\calB$ containing all open and nowhere dense subsets of $X$. Then $\calB$ contains all sets of the form $(U\setminus P)\cup Q$ for $U$ open and $P,Q$ nowhere dense. Hence $\B(X)\subseteq\calB$, implying that $\B(X)$ is the smallest algebra containing all open and nowhere dense subsets of $X$.
\end{proof}

Let $\mathcal A$ be an algebra of subsets of a space $X$. A subset $B\subseteq X$ is said to have {\em property $\mathcal A$} if $B\in \mathcal A$.

\begin{lema}
Let $X$ be a space. Then $\F$ is a closed-open ultrafilter on $X$ if and only if $\F$ is a $\B(X)$ ultrafilter such that for every nowhere dense subset $N$ of $X$ there exists an open set $V\supseteq N$ with $V\notin \F$.     
\end{lema}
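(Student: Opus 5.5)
We prove the two implications separately. The workhorse is the following observation: a closed-open ultrafilter contains no nowhere dense set, and in fact contains the complement of every nowhere dense set. Conversely, a $\B(X)$ ultrafilter with the stated avoidance property behaves like a closed-open ultrafilter.

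\emph{($\Rightarrow$).} Let $\F$ be a closed-open ultrafilter and let $N$ be nowhere dense.

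First, $X\setminus \overline{N}$ is a dense open set. An open ultrafilter contains every dense open set: if $D$ is dense open and $D\notin\F$, then some $F\in\F$ is disjoint from $D$; we may take $F$ open, and a nonempty open set cannot miss a dense set. Hence $X\setminus\overline N\in\F$.

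Since $\F$ is a closed filter, there is a closed $C\in\F$ with $C\subseteq X\setminus\overline N$. Then $V:=X\setminus C$ is open, $V\supseteq \overline N\supseteq N$, and $V\notin\F$ because $V\cap C=\emptyset$. This gives the extra condition.

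Next we show $\F$ is a $\B(X)$ ultrafilter. It is a $\B(X)$ filter, since open sets lie in $\B(X)$. Take $A=(U\setminus P)\cup Q\in\B(X)$. The set $X\setminus(\overline P\cup\overline Q)$ is dense open, so it lies in $\F$, and on it $A$ agrees with $U$. If $U\in\F$, then $A\supseteq U\cap (X\setminus(\overline P\cup\overline Q))\in\F$. Otherwise the open ultrafilter property gives an $F\in\F$ with $F\cap U=\emptyset$. Then $F\cap X\setminus(\overline P\cup \overline Q)\subseteq X\setminus A$, so $X\setminus A\in\F$.

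\emph{($\Leftarrow$).} Let $\F$ be a $\B(X)$ ultrafilter with the avoidance property. The steps are as follows.

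Step (i): no nowhere dense $N$ lies in $\F$. Indeed, $N\subseteq V\notin\F$. Since $X\setminus \overline N$ is dense open, the same holds for $\overline N$. Consequently every dense open set, as the complement of a nowhere dense set, belongs to $\F$ by ultrafilterness.

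Step (ii): $\F$ is an open filter. Take $A=(U\setminus P)\cup Q\in\F$. Then $A\cap(X\setminus(\overline P\cup\overline Q))$ is an open set contained in $A$. It lies in $\F$ as the intersection of $A$ with a dense open set from step (i). Since $\F$ has a base of $\B(X)$ sets, this gives an open base.

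Step (iii): $\F$ is an open ultrafilter. This is immediate, since open sets are in $\B(X)$.

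Step (iv): $\F$ is a closed filter. This is the main obstacle, and the avoidance hypothesis is needed exactly here. Let $W\in\F$ be open. The set $N:=\overline W\setminus W$ is nowhere dense, so there is an open $V\supseteq N$ with $V\notin\F$. As $\F$ is a $\B(X)$ ultrafilter, $X\setminus V\in\F$. Then $W\setminus V=\overline W\setminus V=\overline W\cap(X\setminus V)$ is closed, lies in $\F$, and is contained in $W$. Hence $\F$ has a closed base.

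By Corollary~\ref{corollary list}(5), or directly, $\F$ is then a closed-open ultrafilter.

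The delicate point is step (iv): turning the closure of a set from $\F$ into a closed subset of it. This is done by removing a neighbourhood of the nowhere dense boundary, which the hypothesis guarantees is not in $\F$.
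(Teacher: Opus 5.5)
Your proof is correct and follows essentially the same route as the paper's. In ($\Rightarrow$) you reduce membership of $A=(U\setminus P)\cup Q$ to membership of $U$ via the dense open set $X\setminus(\overline P\cup\overline Q)\in\F$, and take $V$ as the complement of a closed member of $\F$ inside $X\setminus\overline N$. In ($\Leftarrow$) you obtain an open base by intersecting with a dense open set, which the paper does in two steps via $X\setminus Q_A$ and then $X\setminus\overline{P_A}$, and a closed base by removing a neighbourhood $V\notin\F$ of the boundary $\overline W\setminus W$, exactly as the paper does.
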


\begin{proof}\label{wB(X) ultrafilter}
($\Rightarrow$)   
As $\B(X)$ contains all open (and closed) sets, we get that $\F$ is a $\B(X)$ filter. Fix any subset $A\in\B(X)$ such that $A\notin \F$. There exist an open set $U_A$ and nowhere dense sets $P_A, Q_A$ such that $A=(U_A\setminus P_A)\cup Q_A$. 
Since $\F$ is an open ultrafilter, the open dense set $D:=X\setminus (\overline{P_A}\cup \overline{Q_A})$ is in $\F$. It follows that $U_A\notin \F$, as otherwise the inclusion $A\supseteq D\cap U_A\in\F$ yields a contradiction. As $\F$ is an open ultrafilter, $X\setminus U_A\in\F$. Finally, $(X\setminus U_A)\cap D$ is an element of $\F$ disjoint with $A$. Hence $\F$ is a $\B(X)$ ultrafilter. Fix any nowhere dense subset $N$ of $X$. Since $\F$ is an open ultrafilter, the open dense set $X\setminus \overline{N}$ belongs to $\F$. Since $\F$ is a closed ultrafilter, there exists a closed set $C\in\F$ such that $C\subseteq X\setminus \overline{N}$. Then $V=X\setminus C$ is an open set that contains $N$ and does not belong to $\F$.

($\Leftarrow$) Since $\B(X)$ contains all open (and closed) sets, it suffices to show that $\F$ is a closed-open filter. Fix any $A\in \B(X)\cap \F$. There exist an open set $U_A$ and nowhere dense sets $P_A, Q_A$ such that $A=(U_A\setminus P_A)\cup Q_A$. By the assumption, each element of $\F$ is dense in an open set. Since $Q_A\notin \F$, $Q_A\in\B(X)$, and $\F$ is a $\B(X)$ ultrafilter, $X\setminus Q_A\in\F$. It follows that $U_A\supseteq A\cap (X\setminus Q_A)\in\F$. By the assumption, $\overline{P_A}\in \B(X)\setminus \F$. Since $\F$ is a $\B(X)$ ultrafilter, we get $X\setminus \overline{P_A}\in\F$ and, consequently, $B:=U_A\cap (X\setminus \overline{P_A})\in\F$. As $B$ is an open set contained in $A$ we get that $\F$ is an open filter. Consider the nowhere dense set $N:=\overline{B}\setminus B$. By the assumption, there exists an open set $V$ such that $N\subseteq V$ and $V\notin\F$. Since $V\in\B(X)$ and $\F$ is a $\B(X)$ ultrafilter, $X\setminus V\in\F$. It is easy to check that $C=B\cap (X\setminus V)\in \F$ is a closed subset contained in $B$. Hence $\F$ is also a closed filter, as required. 
\end{proof}

Following~\cite{buk-str}, a~measure $\mu$ on a~space $X$ is said to be {\em non-atomic} if $\mu(\{x\})=0$ for each $x\in X$.
For a $\{0,1\}$-measure $\mu$ let $\F_\mu$ be the filter whose base consists of measure one sets.

\begin{prop}\label{measuretech}
    Let $\mu$ be a finitely additive $\{0,1\}$-measure defined on an algebra $\calA$ of subsets of a space $X$. Then the following assertions hold:
\begin{enumerate}[\rm(1)]
    \item $\F_\mu$ is an $\calA$ ultrafilter.
    \item If $\mu$ is outer regular, then $\F_\mu$ is a~closed filter.
    \item If $\B(X)\subseteq \calA$ and $\mu$ is outer regular, then $\F_\mu$ is a~closed ultrafilter.
    \item If $\mu$ is outer regular and non-atomic, then $\F_\mu$ has no accumulation points.    
     \item If $\B(X)\subseteq \calA$, $\mu$ is outer regular and vanishes on nowhere dense sets, then $\F_\mu$ is a~closed-open ultrafilter. 
     \item If $X$ is $T_1$, $\B(X)\subseteq \calA$, $\mu$ is outer regular and vanishes on nowhere dense sets, then $\F_\mu$ has no accumulation points if and only if $\{x\}\notin\F_\mu$ for any isolated point $x\in X$.       
\end{enumerate}        
\end{prop}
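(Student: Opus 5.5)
I will go through the six items in order, and each time use what was proved just before.

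For (1), $\F_\mu=\{F\subseteq X:\ \exists A\in\calA,\ \mu(A)=1,\ A\subseteq F\}$ is a filter. It does not contain $\emptyset$ because $\mu(\emptyset)=0$. It is closed under finite intersections because, for $A,B\in\calA$ of measure one, additivity applied to the partition $A=(A\cap B)\cup(A\setminus B)$ together with $\mu(A\setminus B)\le\mu(X\setminus B)=0$ gives $\mu(A\cap B)=1$. (Here I assume $\mu(X)=1$, which is implicit in the definition of $\F_\mu$.) By construction it has a base in $\calA$. For $A\in\calA$ we have $\mu(A)+\mu(X\setminus A)=1$, so either $A$ or $X\setminus A$ belongs to $\F_\mu$. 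This shows $\F_\mu$ is an $\calA$ ultrafilter.

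For (2), take $A\in\calA$ with $\mu(A)=1$. Then $\mu(X\setminus A)=0$, and outer regularity gives an open $U\in\calA$ with $X\setminus A\subseteq U$ and $\mu(U)=0$. Hence $X\setminus U$ is a closed element of $\calA$ of measure one contained in $A$. For this I need $U\in\calA$, which is automatic if the infimum in outer regularity ranges over open sets in $\calA$; I will read the definition that way.

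For (3), if $\B(X)\subseteq\calA$, every closed set lies in $\calA$. By (1), either it or its complement is in $\F_\mu$, and (2) says $\F_\mu$ is a closed filter, so $\F_\mu$ is a closed ultrafilter.

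For (4), fix $x$. Since $\mu(\{x\})=0$, outer regularity gives an open $U\ni x$ with $\mu(U)=0$. Then $X\setminus U\in\F_\mu$, and $U$ is a neighbourhood of $x$ disjoint from it, so $x\notin\overline{X\setminus U}$ and $x$ is not an accumulation point.

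For (5), (3) already gives a closed ultrafilter. Since $\mu$ vanishes on nowhere dense sets, no nowhere dense set is in $\F_\mu$, and Proposition~\ref{op-to-cl_ultra} makes $\F_\mu$ an open ultrafilter. Hence it is closed-open.

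For (6), the forward direction is immediate: if $x$ is isolated and $\{x\}\in\F_\mu$, then $x$ lies in the closure of every member of $\F_\mu$, so $x$ is an accumulation point. For the converse, fix $x$ and split into cases.

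\begin{itemize}
\item If $x$ is not isolated, then in a $T_1$ space $\{x\}$ is nowhere dense, so $\mu(\{x\})=0$, and I argue exactly as in (4).
\item If $x$ is isolated and $\{x\}\notin\F_\mu$, then $\{x\}\in\calA$ is open with measure zero, so $X\setminus\{x\}\in\F_\mu$, and it is closed by $T_1$. Its complement $\{x\}$ is then a neighbourhood of $x$ missing that member, so again $x$ is not an accumulation point.
\end{itemize}

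The main delicate point is (2): making sure the open sets supplied by outer regularity lie in $\calA$. Everything else is routine bookkeeping.
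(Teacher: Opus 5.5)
Your proof is correct and follows the paper's argument item by item. The differences are minor. In (2) you apply outer regularity only to complements of measure-one sets in $\calA$, which is slightly more careful than the paper's version, where it is applied to $X\setminus F$ for an arbitrary $F\in\F_\mu$. In (6) you handle non-isolated points by repeating the outer-regularity argument from (4), rather than using the closed-ultrafilter property obtained in (5).
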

\begin{proof}
(1) By the definition, $\F_\mu$ is an $\calA$ filter. Fix any $A\in\calA$ such that $A\notin \F_\mu$. Then $\mu(A)=0$. Since $\mu$ is finitely additive and $\mu(X)=1$, we get that $\mu(X\setminus A)=1$. Thus, $X\setminus A\in\F_\mu$ witnessing that $\F_\mu$ is an $\calA$ ultrafilter.

(2) Fix any $F\in\F_\mu$. By the outer regularity of $\mu$, there exists an open set $C$ such that $X\setminus F\subseteq C$ and $\mu(C)=0$. It follows that $X\setminus C\in \F_\mu$ is a closed subset of $F$, witnessing that $\F_\mu$ is a closed filter.

Assertion (3) follows from assertions (1) and (2).

(4) Fix any $x\in X$. Since $\mu(\{x\})=0$, the outer regularity of $\mu$ yields an open neighborhood $U$ of $x$ such that $\mu(U)=0$. It follows that $X\setminus U\in\F_\mu$ and, as such, the filter $\F_\mu$ has no accumulation points.

(5) By assertion (3), $\F_\mu$ is a closed ultrafilter. Proposition~\ref{op-to-cl_ultra} implies that $\F_\mu$ is an open ultrafilter. 


(6) The implication ($\Rightarrow$) is obvious. 

($\Leftarrow$) Assertion (5) implies that $\F_\mu$ is a closed-open ultrafilter. Fix any $x\in X$. If $x$ is isolated, then by assumption $\{x\}\notin \F_\mu$. Since $\F_\mu$ is an open ultrafilter, $X\setminus\{x\}\in \F_\mu$. Then the open set $\{x\}$ witnesses that $x$ is not an accumulation point of $\F_\mu$. Suppose that the point $x$ is not isolated. Since $\{x\}$ is a nowhere dense set, we get that $\{x\}\notin \F_\mu$. Since $\F_\mu$ is a closed ultrafilter and $\{x\}$ is a closed set, there exists a closed set $C\in\F_\mu$ such that $x\notin C$. Then the open set $X\setminus C$ witnesses that $x$ is not an accumulation point of $\F_\mu$. Since the point $x$ is chosen arbitrarily, the filter $\F_\mu$ has no accumulation points.  
\end{proof}

For any closed-open ultrafilter $\F$ on a space $X$ define the map $\mu_\F\colon \B(X)\rightarrow \{0,1\}$ by $\mu_\F(A)=1$ if $A\in \F$ and $\mu_\F(A)=0$, otherwise. 

\begin{prop}\label{regularmeasure}
Let $\F$ be a closed-open ultrafilter on a given space $X$. Then $\mu_\F$ is an outer regular $\{0,1\}$-measure that is defined on $\B(X)$ and vanishes on nowhere dense sets.     
\end{prop}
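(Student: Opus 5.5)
The plan is to obtain everything from the Lemma characterizing closed-open ultrafilters via $\B(X)$ (the one proved right after Lemma~\ref{algebraclear}). That lemma says a closed-open ultrafilter $\F$ is a $\B(X)$ ultrafilter, and for every nowhere dense $N$ there is an open $V\supseteq N$ with $V\notin\F$. So for every $A\in\B(X)$ exactly one of $A$, $X\setminus A$ lies in $\F$. Consequently $\mu_\F$ is well defined on $\B(X)$, takes values in $\{0,1\}$, and $\mu_\F(X)=1$.

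First I check finite additivity. Take disjoint $A,B\in\B(X)$; then $A\cup B\in\B(X)$ by Lemma~\ref{algebraclear}. Since $A\cap B=\emptyset\notin\F$, at most one of $A$, $B$ belongs to $\F$. If one of them does, then $A\cup B\in\F$ because $\F$ is upward closed, and both sides of the additivity equation equal $1$. If neither does, then $X\setminus A$ and $X\setminus B$ both lie in $\F$ by the $\B(X)$ ultrafilter property. Hence their intersection $X\setminus(A\cup B)$ lies in $\F$, so $A\cup B\notin\F$ and both sides equal $0$.

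Next, vanishing on nowhere dense sets. Let $N$ be nowhere dense; note $N\in\B(X)$. The lemma gives an open $V\supseteq N$ with $V\notin\F$. If $N$ were in $\F$, then so would be its superset $V$, which is impossible. So $\mu_\F(N)=0$.

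Finally, outer regularity, which is the only step needing care. Since $\mu_\F$ is monotone, the infimum over open supersets of $A$ is always at least $\mu_\F(A)$. If $\mu_\F(A)=1$, the superset $X$ gives equality. If $\mu_\F(A)=0$, I need an open $U\supseteq A$ with $U\notin\F$. In this case $X\setminus A\in\F$, and since $\F$ is a closed filter there is a closed $C\in\F$ with $C\subseteq X\setminus A$. Then $U=X\setminus C$ is open, contains $A$, and is disjoint from $C\in\F$, so $U\notin\F$ and $\mu_\F(U)=0$. I expect no real obstacle; the only point requiring attention is to use that $\F$ is a closed filter, not merely a $\B(X)$ ultrafilter, in this last step.
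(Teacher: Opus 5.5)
Your proof is correct and is essentially the paper's argument. The outer regularity step is identical to the paper's. For additivity, the paper redoes inline, via the decomposition $A=(U_A\setminus P_A)\cup Q_A$, the argument that you instead import from the lemma showing that a closed-open ultrafilter is a $\B(X)$ ultrafilter. The one small difference is that the paper gets vanishing on nowhere dense sets directly from the fact that every element of an open filter has nonempty interior, whereas you use the lemma's second clause.
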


\begin{proof}
In order to show that $\mu_\F$ is a measure, fix any disjoint sets $A,B\in\B(X)$. Since $\F$ is a filter and $A\cap B=\emptyset$, either $A\notin\F$ or $B\notin\F$. If $A\in\F$ and $B\notin\F$ or $B\in\F$ and $A\notin\F$, we get $\mu_\F(A\cup B)=1=\mu_\F(A)+\mu_\F(B)$. It remains to consider the case when $A\notin\F$ and $B\notin\F$. By the definition of $\B(X)$, there exist open sets $U_A, U_B$ and nowhere dense sets $P_A,Q_A,P_B,Q_B$ such that $A=(U_A\setminus P_A)\cup Q_A$ and $B=(U_B\setminus P_B)\cup Q_B$. Since $\F$ is an open ultrafilter, the open dense set $X\setminus \overline{P_A}$ belongs to $\F$. We claim that $U_A\notin \F$. Indeed, assuming the contrary, we get $A\supseteq (X\setminus P_A)\cap U_A\in\F$, which contradicts the choice of $A$. Thus $U_A\notin \F$. Similarly, it can be shown that $U_B\notin \F$. Since $\F$ is an open ultrafilter,  $\{X\setminus U_A, X\setminus \overline{Q_A}, X\setminus U_B, X\setminus \overline{Q_B}\}\subseteq \F$. It follows that 
$$X\setminus (A\cup B)\supseteq (X\setminus U_A)\cap (X\setminus \overline{Q_A})\cap (X\setminus U_B)\cap (X\setminus \overline{Q_B}) \in \F.$$
Thus $\mu_\F(A\cup B)=0=\mu_{\F}(A)+\mu_\F(B)$. Hence $\mu_\F$ is a $\{0,1\}$-measure defined on $\B(X)$.

Since each element of $\F$ has nonempty interior, $\mu_\F$ vanishes on nowhere dense sets. It remains to show the outer regularity of $\mu_\F$. Fix any $A\in\B(X)$ such that $\mu_\F(A)=0$. Then $X\setminus A\in\F$. Since $\F$ is a closed filter, there exists a closed set $C\in\F$ such that $C\subseteq X\setminus A$. Then $\mu_\F(X\setminus C)=0=\mu_\F(A)$, witnessing that the measure $\mu_\F$ is outer regular.  
\end{proof}

The following corollary of Propositions~\ref{measuretech} and~\ref{regularmeasure}  establishes a one-to-one correspondence between closed-open ultrafilters on a space $X$ with no accumulation points and outer regular $\{0,1\}$-measures which are defined on $\B(X)$ and vanish on nowhere dense and finite sets. 

\begin{corollary}\label{correspondence}
For a space $X$ the following assertions hold:
\begin{enumerate}[\rm(i)]
   \item If $\mu$ is an outer regular $\{0,1\}$-measure that is defined on $\B(X)$ and vanishes on nowhere dense and finite sets, then $\F_\mu$ is a closed-open ultrafilter without accumulation points.
     \item If $\F$ is a closed-open ultrafilter on $X$ with no accumulation points, then $\mu_\F$ is an outer regular $\{0,1\}$-measure that is defined on $\B(X)$ and vanishes on nowhere dense and finite sets.
\end{enumerate}  
\end{corollary}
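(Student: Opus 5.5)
For item (i), the plan is to apply Proposition~\ref{measuretech} with $\calA=\B(X)$. By Proposition~\ref{measuretech}(5), since $\mu$ is outer regular, is defined on $\B(X)$ and vanishes on nowhere dense sets, the filter $\F_\mu$ is a closed-open ultrafilter. It remains to rule out accumulation points, and the cleanest route is Proposition~\ref{measuretech}(4). For this we check that $\mu$ is non-atomic. Every singleton $\{x\}$ is finite, and finite sets lie in $\B(X)$: if $x$ is not isolated, $\{x\}$ is nowhere dense (assuming $T_1$, or in general $\{x\}$ has empty interior closure issues aside); if $x$ is isolated, $\{x\}$ is open. In either case $\mu(\{x\})=0$ by hypothesis, so $\mu$ is non-atomic, and (4) gives that $\F_\mu$ has no accumulation points. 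This avoids needing the $T_1$ hypothesis of item (6).

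For item (ii), Proposition~\ref{regularmeasure} already shows that $\mu_\F$ is an outer regular $\{0,1\}$-measure on $\B(X)$ vanishing on nowhere dense sets. So only finite sets need attention. Let $A$ be finite; it suffices to show $A\notin\F$, since then $\mu_\F(A)=0$.

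\begin{itemize}
\item Suppose instead $A\in\F$. Each $x\in A$ is not an accumulation point of $\F$, so there are an open $U_x\ni x$ and $F_x\in\F$ with $U_x\cap F_x=\emptyset$.
\item Then $F=\bigcap_{x\in A}F_x\cap A\in\F$ is disjoint from every $U_x$, yet $F\subseteq A\subseteq\bigcup_x U_x$.
\item Hence $F=\emptyset$, which is a contradiction.
\end{itemize}

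The only delicate point is checking that finite sets belong to $\B(X)$, so that the hypothesis ``vanishes on finite sets'' makes sense. A finite set is the union of its isolated points, which form an open set, and its non-isolated points. In a $T_1$ setting the non-isolated points form a nowhere dense set. Without $T_1$, one can instead argue in (i) directly via outer regularity applied to $\{x\}$ whenever it lies in $\B(X)$, and I would interpret the hypothesis as applying to finite sets in $\B(X)$. I expect this bookkeeping, rather than any real mathematics, to be the main obstacle.
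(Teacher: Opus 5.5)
Your proof is correct and takes the paper's route. The paper states the corollary as an immediate consequence of Proposition~\ref{measuretech} and Proposition~\ref{regularmeasure}: item (5) of Proposition~\ref{measuretech} gives the closed-open ultrafilter property, item (4) or (6) gives the absence of accumulation points, and for (ii) one needs the implicit observation, which you prove, that a filter without accumulation points contains no finite set.

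One caveat concerns your fallback of reading the hypothesis as applying only to finite sets that lie in $\B(X)$. That reading makes (i) false outside the $T_1$ setting: for an infinite indiscrete $X$ one has $\B(X)=\{\emptyset,X\}$, and $\F_\mu=\{X\}$ accumulates at every point. The statement should therefore be read, as the paper implicitly does, for spaces in which finite sets belong to $\B(X)$, e.g.\ $T_1$ spaces; there your main argument applies verbatim.
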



\section{Remote points}\label{s4}

For a point $y$ of a space $X$ by $\mathcal N(y)$ we denote the open filter on $X$ generated by the set of all open neighborhoods of $y$. The following was proven in~\cite[Theorem 14.2]{ED81}.
\par
\begin{theorem}[van Douwen]\label{douwen-traces}
    Let $X$ be a Tychonoff space and $p\in X^*$. Then the following conditions are equivalent.
    \begin{enumerate}[\rm(a)]
    \item $p$ is a~remote point.
    \item The trace of~$\calN(p)$ on~$X$ is an open ultrafilter.
    \item The trace of~$\calN(p)$ on~$X$ is a~closed ultrafilter.
    \end{enumerate}
\end{theorem}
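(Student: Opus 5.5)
Throughout, let $\mathcal T$ denote the trace of $\calN(p)$ on $X$, i.e.\ the filter on $X$ with base $\{U\cap X: U\in \calN(p)\}$. Since $X$ is dense in $\beta(X)$, each $U\cap X$ with $U$ open and nonempty is nonempty, so $\mathcal T$ is an open filter on $X$. It is also a closed filter: for an open $U\ni p$, regularity of $\beta(X)$ gives an open $V\ni p$ with $\overline{V}^{\beta X}\subseteq U$, and then $\cl_X(V\cap X)\subseteq U\cap X$. Hence $\mathcal T$ is an open-closed filter, and Corollary~\ref{corollary list}(5) shows that (b) and (c) are equivalent. It therefore suffices to prove (a)$\Leftrightarrow$(b).

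(b)$\Rightarrow$(a). Let $N\subseteq X$ be nowhere dense. The set $X\setminus\overline{N}$ is open dense, hence meets every nonempty open set, so it is not disjoint from any member of $\mathcal T$. Because $\mathcal T$ is an open ultrafilter, this forces $X\setminus \overline N\in\mathcal T$. So there is an open $U\ni p$ in $\beta X$ with $U\cap X\subseteq X\setminus\overline N$. Then $U\cap N=\emptyset$, and therefore $p\notin \cl_{\beta X}N$. This shows $p$ is remote.

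(a)$\Rightarrow$(b). Fix an open $W\subseteq X$. We must show that either $W\in\mathcal T$ or some member of $\mathcal T$ is disjoint from $W$.
\begin{itemize}
\item The boundary $B=\overline W\setminus W$ is nowhere dense, so remoteness gives $p\notin\cl_{\beta X}B$.
\item Let $\widetilde{O}$ denote the largest open subset of $\beta X$ whose trace on $X$ is $O$. Put $W_1=W$ and $W_2=X\setminus\overline W$.
\item The set $\widetilde{W_1}\cup\widetilde{W_2}$ is dense in $\beta X$, and $W_1\cup W_2\cup B=X$. From this I expect to show that
\[\beta X=\widetilde{W_1}\cup \widetilde{W_2}\cup \cl_{\beta X}B\cup \bigl(\cl_{\beta X}W_1\cap\cl_{\beta X}W_2\bigr).\]
\end{itemize}

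The main obstacle is excluding the possibility $p\in\cl_{\beta X}W_1\cap \cl_{\beta X}W_2$ when $p\notin \cl_{\beta X}B$. For this I would choose a continuous $f\colon\beta X\to[0,1]$ with $f(p)=0$ and $f\equiv1$ on $\cl_{\beta X}B$. Set $Z=f^{-1}([0,1/2])\cap X$; this is a zero set of $X$ disjoint from $B$. Then $Z\cap W_1$ and $Z\cap W_2$ are disjoint and both clopen in $Z$, and they cover $Z$.

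I would use the fact that two disjoint zero sets of $X$ have disjoint closures in $\beta X$. To apply it, I need $Z\cap W_i$ to be a zero set of $X$, which is not automatic. The fix is to separate $Z\cap W_1$ from $Z\cap W_2$ by a continuous function on $X$: take $g=1$ on $W_1$ and $g=0$ on $W_2$, multiplied by a factor built from $f$ that is supported away from $B$. Concretely, the function
\[h=\chi_{W_1}\cdot\max\bigl(0,\tfrac12-f|_X\bigr)\]
is continuous on $X$, because it vanishes on a neighbourhood of $B$. Its zero and cozero pieces then give disjoint zero sets $Z\cap \overline{W_1}\cap f^{-1}([0,1/4])$ and $Z\cap\overline{W_2}\cap f^{-1}([0,1/4])$. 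These cover the neighbourhood $f^{-1}([0,1/4))\cap X$ of $p$'s trace, so their closures are disjoint. Since $p$ lies in the closure of this neighbourhood, $p$ lies in exactly one of the two closures, say $\cl_{\beta X}(W_i)$ for a single $i$.

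Then $f^{-1}([0,1/4))\setminus \cl_{\beta X}(\text{other piece})$ is an open neighbourhood of $p$ whose trace lies in $W_i$. If $i=1$, this gives $W\in\mathcal T$; if $i=2$, it gives a member of $\mathcal T$ disjoint from $W$. In either case $\mathcal T$ is an open ultrafilter, which proves (b).
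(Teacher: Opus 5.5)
\textbf{Comparison with the paper.} The paper does not prove this statement; it only quotes van Douwen's Theorem~14.2. Your argument is therefore a self-contained alternative rather than a variant of something in the text, and it is correct.

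Your handling of (b)$\Leftrightarrow$(c) fits the paper's own machinery well: you check, via regularity of $\beta X$, that the trace $\mathcal T$ is always an open-closed filter, and then Corollary~\ref{corollary list}(5) gives the equivalence. Your (b)$\Rightarrow$(a) is complete as written.

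The substance is (a)$\Rightarrow$(b). Your Urysohn-type construction with $f$ and $h$ shows that $p\notin\cl_{\beta X}(\overline W\setminus W)$ rules out $p\in\cl_{\beta X}W\cap\cl_{\beta X}(X\setminus\overline W)$. Equivalently, the boundary in $\beta X$ of $\widetilde W=\beta X\setminus\cl_{\beta X}(X\setminus W)$ is exactly $\cl_{\beta X}(\overline W\setminus W)$. This is a standard fact about the extension operator $O\mapsto\widetilde O$. What your route buys is a proof that does not depend on the citation, at the price of constructing $f$ and $h$ by hand.

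\textbf{Presentational fixes.} First, the bullets introducing $\widetilde O$ and the displayed decomposition of $\beta X$ are never used (the display is true but unproved), so they can be deleted or kept only as motivation.

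Second, the last step is clearer if you name the two pieces $A_1=(f|_X)^{-1}([0,1/4])\cap h^{-1}([1/4,1/2])$ and $A_2=(f|_X)^{-1}([0,1/4])\cap h^{-1}(0)$. These equal $W_1\cap f^{-1}([0,1/4])$ and $W_2\cap f^{-1}([0,1/4])$, so the intersection with $Z$ is redundant. Then separate the two reasons you currently run together:
\begin{enumerate}
\item $\cl_{\beta X}A_1$ and $\cl_{\beta X}A_2$ are disjoint \emph{because} $A_1,A_2$ are disjoint zero sets of $X$.
\item $p\in\cl_{\beta X}A_1\cup\cl_{\beta X}A_2$ \emph{because} these sets cover $f^{-1}([0,1/4))\cap X$, which is dense in a neighbourhood of $p$.
\end{enumerate}
Finally, build the neighbourhood $f^{-1}([0,1/4))\setminus\cl_{\beta X}A_j$ from $p\notin\cl_{\beta X}A_j$, rather than from a statement about $\cl_{\beta X}W_j$.
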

\par


The following theorem reveals a correspondence between zero-cozero-closed-open ultrafilters without  accumulation points on a space~$X$ and remote points from $X^*$. 

\begin{proposition}\label{remote}
Let $X$ be a Tychonoff space. Then $\F\in X^*$ is a remote point if and only if $\F$ is a zero-cozero-closed-open ultrafilter on $X$ with no accumulation points.
\end{proposition}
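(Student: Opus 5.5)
The plan is to compare two filters on $X$: a point $p\in\beta(X)$, which is a zero ultrafilter, and the trace of $\calN(p)$ on $X$. Theorem~\ref{douwen-traces} (van Douwen) then turns remoteness into an ultrafilter condition on that trace.

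\emph{Step 1: free points.} A zero ultrafilter $p$ lies in $X^*$ iff it has no accumulation point in $X$. Suppose $x$ is an accumulation point of $p$. Every $Z\in p$ is closed, so $x\in Z$. Hence $p$ is contained in the zero ultrafilter of $x$, and maximality gives equality, so $p$ is the point $x$. The converse is clear.

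\emph{Step 2: computing the trace.} Take the basic neighbourhood $\{u\in\beta(X): A\notin u\}$ of $p$, where $A$ is a zero set with $A\notin p$. A point $x\in X$ belongs to it iff $x\notin A$, so its trace on $X$ is the cozero set $X\setminus A$. Since $p$ is a zero ultrafilter, $A\notin p$ iff some $Z\in p$ satisfies $Z\cap A=\emptyset$, i.e. $Z\subseteq X\setminus A$. Hence the trace $T_p$ of $\calN(p)$ on $X$ is the filter generated by all cozero sets that contain a member of $p$. In particular $T_p$ is a cozero filter and $T_p\subseteq\langle p\rangle$, the filter generated by $p$. I expect this identification to be the main technical point; the rest is bookkeeping with the results of Section~\ref{s2}.

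\emph{($\Rightarrow$).} Let $p=\F$ be remote. By Theorem~\ref{douwen-traces}, $T_p$ is both an open ultrafilter and a closed ultrafilter.
\begin{itemize}
\item First show $p\subseteq T_p$. Take $Z\in p$ and suppose $Z\notin T_p$. Since $Z$ is closed and $T_p$ is a closed ultrafilter, some member of $T_p$ is disjoint from $Z$. That member contains some $Z'\in p$, so $Z\cap Z'=\emptyset$, which is impossible.
\item Together with Step 2 this gives $T_p=\langle p\rangle$.
\item So this filter is a zero filter and a cozero filter, and both an open and a closed ultrafilter.
\item It is a zero ultrafilter because it is generated by the zero ultrafilter $p$. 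By Corollary~\ref{corollary list}(3) it is then a cozero ultrafilter.
\item By Step 1 it has no accumulation points.
\end{itemize}
This is the standard identification of $p$ with the filter it generates.

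\emph{($\Leftarrow$).} Let $\F$ be a zero-cozero-closed-open ultrafilter without accumulation points.
\begin{itemize}
\item Its zero members form a zero ultrafilter $p$ generating $\F$, so $p\in\beta(X)$, and by Step 1 $p\in X^*$.
\item By Step 2, $T_p\subseteq\F$.
\item Conversely, $\F$ has a base of cozero sets $C$. Since $\F$ is a zero filter, each such $C$ contains a zero set $Z\in\F$. Hence $C\in T_p$, so $\F\subseteq T_p$.
\item Thus $T_p=\F$ is an open ultrafilter, and Theorem~\ref{douwen-traces} shows that $p$ is remote.
\end{itemize}
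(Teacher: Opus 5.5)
Your proof is correct and follows essentially the same route as the paper: both directions reduce to van Douwen's Theorem~\ref{douwen-traces} by showing that the trace of $\calN(\F)$ on $X$ coincides with $\F$. Your explicit computation of that trace in Step~2, as the filter generated by the cozero members of $\F$, is a mild streamlining. It gives one inclusion and the cozero-filter property at once, so you do not need the paper's Urysohn-function argument in $\beta(X)$ or its disjoint-zero-sets argument, and your Step~1 spells out the no-accumulation-point claim that the paper only attributes to Hausdorffness of $\beta(X)$.
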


\begin{proof}
    ($\Rightarrow$): Recall that elements of $X^*$ are zero ultrafilters. Assume that a zero ultrafilter $\F\in X^\ast$ is a~remote point and denote the trace of $\calN(\F)$ on~$X$ by $\mathcal H$. Since $\beta(X)$ is Hausdorff, we get that $\F$ has no accumulation points in $X$. By Theorem~\ref{douwen-traces}, the filter $\mathcal H$ is a~closed-open ultrafilter. Fix any $U\in\mathcal N(\F)$.
    Let us first show that $\mathcal H$ is a~cozero filter. Since all cozero sets of $\beta (X)$ form a~base of open sets in $\beta (X)$, there exists a~cozero (in $\beta (X)$) set $W\in\calN(\F)$ with $W\subseteq U$. Then $W\cap X$ is a~cozero set in $X$ which belongs to $\mathcal H$ and $W\cap X\subseteq U\cap X$. Hence $\mathcal H$ is a~cozero filter.
    Let us show that $\mathcal H$ is a~zero filter. Since $\beta (X)$ is Tychonoff, there exists a continuous function $f:\beta (X)\rightarrow [0,1]$ such that $f(\beta (X)\setminus U)=0$ and $f(\F)=1$. Then $f^{-1}([1/2,1])\subseteq U$ is a zero set that is contained in $\calN(\F)$. 
    Then $f^{-1}([1/2,1])\cap X\in \mathcal H$ is a zero set in $X$ that is contained in $U\cap X$. Hence $\mathcal H$ is a~zero filter. Since $\mathcal H$ is a closed-open ultrafilter and a zero-cozero filter, we get that $\mathcal H$ is a closed-open-zero-cozero ultrafilter. To derive a contradiction, assume that $\F\neq \mathcal H$. Then there exist disjoint zero subsets $A,B$ of $X$ such that $A\in \mathcal H$ and $B\in \F$. The set 
    $T=\{y\in\beta (X): X\setminus A\in y\}$ is an open neighborhood of $\mathcal F$ in $\beta (X)$. By the definition of $\mathcal H$, $T\cap X\in \mathcal H$, and thus $\emptyset=T\cap X\cap A\in \mathcal H$. The obtained contradiction implies $\F= \mathcal H$. 

 ($\Leftarrow$):  Let $\F$ be a zero-cozero-closed-open ultrafilter on a space~$X$ with no accumulation points. Then $\F$ belongs to $X^*$. 
   It is clear that the filter $\mathcal N(\F)$ traces on $X$ an open filter, which we denote by $\mathcal H$. We are going to show that $\mathcal H=\F$. Since $\F$ is an open ultrafilter, it suffices to check that $\F\subseteq \mathcal H$. Fix any $F\in \F$. Since $\F$ is a cozero ultrafilter, there exists a cozero set $F'\in \F$ such that $F'\subseteq F$. 
   Note that $X\setminus F'$ is a zero set and $X\setminus F'\notin \F$. Then the set $W=\{u\in\beta (X)\colon X\setminus F'\notin u\}$ is an open neighborhood of $\F$ in $\beta (X)$. Observe that $W\cap X\in\mathcal H$ and $W\cap X= F'\subseteq F$. Hence $\F\subseteq \mathcal H$, as required.
\end{proof}
\begin{corol}\label{cor2}
    Let $X$ be a Tychonoff space and $p\in X^*$. The~following are equivalent.
    \begin{enumerate}[\rm(a)]
    \item $p$ is a remote point;
    \item $p$ is a zero-cozero-closed-open ultrafilter with no accumulation points;
    \item $p$ is a zero-cozero-open ultrafilter with no accumulation points;
    \item $p$ is a zero-cozero-closed ultrafilter with no accumulation points.
    \end{enumerate} 
\end{corol}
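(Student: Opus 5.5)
The equivalence (a)$\Leftrightarrow$(b) is exactly Proposition~\ref{remote}, since $p\in X^*$ is by definition a zero ultrafilter. It then suffices to show (b)$\Rightarrow$(c), (b)$\Rightarrow$(d), (c)$\Rightarrow$(b) and (d)$\Rightarrow$(b). The two implications out of (b) are trivial, because (c) and (d) each just drop one of the requirements in (b).

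For (d)$\Rightarrow$(b), let $p$ be a zero-cozero-closed ultrafilter with no accumulation points. It remains to show that $p$ is an open ultrafilter and an open filter.
\begin{enumerate}[\rm(1)]
\item Since $p$ is a cozero filter, it has a base of open sets, so it is an open filter.
\item Corollary~\ref{corollary list}(1) then says that a filter which is open and a closed ultrafilter is an open ultrafilter.
\end{enumerate}
This completes the argument with no further input.

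For (c)$\Rightarrow$(b), let $p$ be a zero-cozero-open ultrafilter with no accumulation points. Since $p$ is a zero filter, it has a base of closed sets, so it is a closed filter. Corollary~\ref{corollary list}(2) then gives that a closed filter which is an open ultrafilter is a closed ultrafilter. Hence $p$ is zero-cozero-closed-open.

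Alternatively, the implications into (a) can be routed through van Douwen's Theorem~\ref{douwen-traces}, identifying the trace of $\calN(p)$ with $p$ as in the proof of Proposition~\ref{remote}, ($\Leftarrow$). That identification only used that $p$ is a cozero filter and an open ultrafilter. In case (d) we would first need the open ultrafilter property via step (2) above anyway, so the direct route is simpler.

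The argument is mostly bookkeeping, and the only point needing care is that the definitions line up. Specifically, a zero (resp.\ cozero) filter automatically has a base of closed (resp.\ open) sets, because zero sets are closed and cozero sets are open. Also, the ``ultrafilter'' clauses of Corollary~\ref{corollary list}(1),(2) apply to $p$ as a filter on $X$, which it is.
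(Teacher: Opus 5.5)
Your proof is correct and follows the paper's argument. As in the paper, (a)$\Leftrightarrow$(b) is Proposition~\ref{remote}, and (b)$\Leftrightarrow$(d) and (b)$\Leftrightarrow$(c) come from items (1) and (2) of Corollary~\ref{corollary list}; your remark that a zero (resp.\ cozero) filter is automatically a closed (resp.\ open) filter supplies the hypotheses the paper leaves implicit.
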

\begin{proof}
    The equivalence of items (a) and (b) is established in Theorem~\ref{remote}. The equivalences (b) $\Leftrightarrow$ (c) and (b) $\Leftrightarrow$ (d) follow from items (1) and (2) of Corollary~\ref{corollary list}. 
\end{proof}




Proposition~\ref{pnormal} and Theorem~\ref{remote} imply the following.

\begin{proposition}\label{rem-ultra}
 Let $X$ be a normal space. Then $\F\in X^*$ is a remote point if and only if $\F$ is a closed-open ultrafilter on $X$ with no accumulation points.    
\end{proposition}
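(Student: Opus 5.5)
The statement follows directly from Proposition~\ref{pnormal} and Proposition~\ref{remote}, so the plan is to treat each direction separately and let those two results carry the load. Throughout, $\F\in X^*$ is a zero ultrafilter on the normal (hence Tychonoff) space $X$.

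For ($\Rightarrow$), suppose $\F$ is a remote point. Proposition~\ref{remote} gives that $\F$ is a zero-cozero-closed-open ultrafilter on $X$ with no accumulation points. In particular, $\F$ is a closed-open ultrafilter with no accumulation points. This direction does not use normality.

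For ($\Leftarrow$), suppose $\F$ is a closed-open ultrafilter on $X$ with no accumulation points. Since $X$ is normal, Proposition~\ref{pnormal} shows that $\F$ is also a zero-cozero ultrafilter. Hence $\F$ is a zero-cozero-closed-open ultrafilter with no accumulation points, and the converse direction of Proposition~\ref{remote} yields that $\F$ is a remote point.

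The only point needing care is that Proposition~\ref{remote} concerns the same object $\F$ viewed both as a point of $\beta(X)$ and as a filter on $X$. In the ($\Leftarrow$) direction, $\F$ is already assumed to lie in $X^*$, and it is moreover a zero ultrafilter by Proposition~\ref{pnormal}, so no identification issue arises. There is no substantive obstacle; the proof is essentially bookkeeping.
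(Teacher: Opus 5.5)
Your proof is correct and is exactly the paper's argument: the paper derives this statement directly from Proposition~\ref{pnormal} and Proposition~\ref{remote}, in the same way you do. The forward direction uses only Proposition~\ref{remote}, and the reverse direction uses normality through Proposition~\ref{pnormal} to upgrade a closed-open ultrafilter to a zero-cozero one.
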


Recall that for a given space $X$ by $\NF$ we denote the ideal on $X$ consisting of all nowhere dense subsets of $X$ together with all finite subsets of $X$.

\begin{proposition}\label{rem_Baire}
A Tychonoff space $X$ has a remote point if and only if $X$ is not weakly $\NF$-compact.    
\end{proposition}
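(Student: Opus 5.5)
The plan is to prove both directions directly from the representation of $\beta(X)$ as the space of zero ultrafilters, using that $\mathcal B$ is a base. The fact used repeatedly is the following. For a zero set $Z\subseteq X$, the set $\{u\in\beta(X): Z\in u\}$ is closed in $\beta(X)$, being the complement of a basic open set, and it contains every point of $Z$. Hence $\cl_{\beta(X)}Z\subseteq\{u: Z\in u\}$. In particular, if $Z$ and $Z'$ are disjoint zero sets, then no $u\in\beta(X)$ lies in both closures.

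($\Rightarrow$) Let $p\in X^*$ be remote. For each $A\in\NF$ we first check that $p\notin\cl_{\beta(X)}A$. If $A$ is nowhere dense, this is the definition of remoteness. If $A$ is finite, then $A$ is a finite set of points of the Hausdorff space $\beta(X)$, hence closed, and it does not contain $p\in X^*$. Since $\beta(X)$ is Tychonoff, we pick a continuous $f_A\colon\beta(X)\to[0,1]$ with $f_A\equiv 0$ on $\cl_{\beta(X)}A$ and $f_A(p)=1$. Put $g_A=f_A{\restriction}_X$, $Z_A=g_A^{-1}(0)$ and $\phi(A)=g_A^{-1}([0,1/2))$. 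Then $Z_A$ is a zero set, $\phi(A)$ is a cozero set, and $A\subseteq Z_A\subseteq\phi(A)$.

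For any finite $\mathcal A\subseteq\NF$, the set $\bigcap_{A\in\mathcal A}f_A^{-1}((1/2,1])$ is an open neighbourhood of $p$ in $\beta(X)$. Since $X$ is dense in $\beta(X)$, it contains some $x\in X$, and this $x$ lies outside $\bigcup_{A\in\mathcal A}\phi(A)$. Thus $\phi$ witnesses that $X$ is not weakly $\NF$-compact.

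($\Leftarrow$) Let $\phi$ and zero sets $Z_A$, with $A\subseteq Z_A\subseteq\phi(A)$ for $A\in\NF$, witness that $X$ is not weakly $\NF$-compact. The family $\{X\setminus\phi(A): A\in\NF\}$ consists of zero sets and has the finite intersection property. By Zorn's Lemma it extends to a zero ultrafilter $p\in\beta(X)$.

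Next we show $p\in X^*$. For $x\in X$ we have $\{x\}\in\NF$, and $X\setminus\phi(\{x\})\in p$ is a zero set not containing $x$. The ultrafilter identified with $x$ contains only zero sets containing $x$, so $p\neq x$.

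Finally we show $p$ is remote. Let $N$ be nowhere dense, so $N\in\NF$. The zero sets $Z_N$ and $X\setminus\phi(N)$ are disjoint, and $X\setminus\phi(N)\in p$, so $Z_N\notin p$. Therefore $p\notin\{u: Z_N\in u\}\supseteq\cl_{\beta(X)}Z_N\supseteq\cl_{\beta(X)}N$.

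The only delicate point is the closure computation $\cl_{\beta(X)}Z\subseteq\{u: Z\in u\}$, which must be justified from the given base $\mathcal B$. The rest is routine. Alternatively, the converse could go through Corollary~\ref{cor2}, but the direct argument avoids having to verify the open-ultrafilter property.
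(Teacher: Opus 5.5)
Your proof is correct. The $(\Leftarrow)$ direction is essentially the paper's argument: extend $\{X\setminus\phi(A): A\in\NF\}$ to a zero ultrafilter $p$, note that $Z_N\notin p$, and conclude that $\{u: Z_N\notin u\}$ is a neighbourhood of $p$ missing $N$. You additionally check, using singletons in $\NF$, that $p\in X^*$. The paper leaves this implicit, and it does not follow from the nowhere dense sets alone when $X$ has isolated points.

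The $(\Rightarrow)$ direction is where your route differs. The paper first invokes Proposition~\ref{remote}, and hence van Douwen's Theorem~\ref{douwen-traces}, to view $p$ as a zero-cozero-closed-open ultrafilter with no accumulation points. It then builds $Z_A\subseteq\phi(A)$ from the ultrafilter, with separate cases: finite $A$ is handled by the absence of accumulation points, and nowhere dense $A$ by $X\setminus\overline{A}\in p$ together with the zero-cozero base. The failure of finite covers then comes from $X\setminus\bigcup_{A\in\mathcal A}\phi(A)\in p$. You instead separate $p$ from $\cl_{\beta(X)}A$ by a continuous function on $\beta(X)$, restrict it to $X$, and get non-covering from the density of $X$ in $\beta(X)$.

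Your route is shorter, treats finite and nowhere dense members of $\NF$ uniformly, and does not depend on van Douwen's theorem. The paper's route stays inside its ultrafilter framework. In effect it proves (b)$\Rightarrow$(d) of Theorem~\ref{main1} directly: any zero-cozero-closed-open ultrafilter without accumulation points yields a witness that $X$ is not weakly $\NF$-compact.
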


\begin{proof}
 ($\Rightarrow$) By Theorem~\ref{remote} there exists a closed-open-zero-cozero ultrafilter $\F$ on $X$ with no accumulation points. Since $\F$ has no accumulation points and $X$ is Tychonoff, for each finite subset $A$ of $X$ there exist a zero set $Z_A$ and a cozero set $V_A$ such that $A\subseteq Z_A\subseteq V_A$ and $V_A\notin \F$. Fix any infinite nowhere dense subset $B$ of $X$. Since $\F$ is an open ultrafilter, the open dense set $X\setminus \overline{B}$ belongs to $\F$. Since $\F$ is a zero-cozero filter, there exist a zero set $C\in\F$ and a cozero set $D\in\F$ such that $C\subseteq D\subseteq X\setminus\overline{B}$. Put $Z_B=X\setminus D$ and $V_B=X\setminus C$. Then $Z_B$ is a zero set, $V_B$ is a cozero set, $B\subseteq Z_B\subseteq V_B$ and $V_B\notin \F$. Thus for each $A\in\NF$ there exist a zero set $Z_A$ and a cozero set $V_A$ such that $A\subseteq Z_A\subseteq V_A$ and $V_A\notin \F$. Let $\phi: \NF\rightarrow cz(X)$ be defined by $\phi(A)=V_A$. Then for each finite subfamily $\mathcal A$ of $\NF$ the set $X\setminus \bigcup_{A\in\mathcal A}\phi(A)$ belongs to $\F$ and thus is nonempty. The function $\phi$ witnesses that $X$ is not weakly $\NF$-compact.

 ($\Leftarrow$) Since $X$ is not weakly $\NF$-compact, there exists a function $\phi: \NF\rightarrow cz(X)$ such that for each $A\in\NF$ there exists a zero set $Z_A$ satisfying $A\subseteq Z_A\subseteq \phi(A)$, and for each finite subfamily $\mathcal A\subseteq \NF$ the set $X\setminus \bigcup_{A\in\mathcal A}\phi(A)$ is nonempty. Consider the zero filter $\F$ generated by the family $$\{X\setminus  \bigcup_{A\in\mathcal A}\phi(A)\colon \mathcal A \hbox{ is a finite subset of } \NF\}.$$ 
By Zorn's lemma, $\F$ can be enlarged to a zero ultrafilter $\F'$. In order to show that $\F'$ is a remote point, fix any nowhere dense subset $A\subseteq X$. Since $X\setminus \phi(A)\in\F\subseteq \F'$ and $Z_A\subseteq \phi(A)$, we get that $Z_A\notin\F'$. Then $\{u\in \beta (X): Z_{A}\notin u\}$ is an open neighborhood of $\F'$ in $\beta (X)$ which is disjoint with $A$. Since the nowhere dense set $A$ was chosen arbitrarily, $\F'$ is a remote point.   
\end{proof}

\begin{proposition}\label{lweakly}
 A normal space $X$ is weakly $\NF$-compact if and only if $X$ is $\NF$-compact. 
\end{proposition}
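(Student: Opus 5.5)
We show that $X$ fails to be weakly $\NF$-compact exactly when it fails to be $\NF$-compact. Fix a normal space $X$.

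The easy implication is that $\NF$-compact implies weakly $\NF$-compact. Let $\phi\colon\NF\to cz(X)$ be a map with zero sets $Z_A$ satisfying $A\subseteq Z_A\subseteq\phi(A)$. Every cozero set is open, so $\phi$ is also a map $\NF\to o(X)$ with $A\subseteq\phi(A)$. By $\NF$-compactness some finite $\mathcal A\subseteq\NF$ has $\bigcup_{A\in\mathcal A}\phi(A)=X$. Hence $X$ is weakly $\NF$-compact; normality is not needed here.

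For the converse we argue by contraposition. Suppose $\phi\colon\NF\to o(X)$ satisfies $A\subseteq\phi(A)$ for every $A\in\NF$, and no finite subfamily of $\{\phi(A)\}$ covers $X$. We want a map $\psi\colon\NF\to cz(X)$ with zero sets $Z_A$ such that $A\subseteq Z_A\subseteq\psi(A)\subseteq\phi(A)$. Any finite union of the $\psi(A)$ then sits inside the corresponding union of the $\phi(A)$, so it misses a point, and $\psi$ witnesses that $X$ is not weakly $\NF$-compact.

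The natural attempt is to shrink $\phi(A)$ around $\overline{A}$ using Urysohn's lemma. The catch is that $\phi(A)$ need only contain $A$, not $\overline{A}$, so $\overline{A}$ and $X\setminus\phi(A)$ need not be disjoint. The fix is to change $A$ before applying $\phi$. For $A\in\NF$, the set $\overline{A}$ is again in $\NF$: it is nowhere dense if $A$ is, and finite if $A$ is finite, since $X$ is $T_1$ as a normal space (assuming the convention that normal includes $T_1$). Define $\phi'(A)=\phi(\overline A)$. This is open, contains the closed set $\overline A$, and each $\phi'(A)$ is one of the original sets, so no finite subfamily of $\{\phi'(A)\}$ covers $X$.

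Now apply Urysohn's lemma to the disjoint closed sets $\overline A$ and $X\setminus\phi'(A)$. It gives a continuous $f_A\colon X\to[0,1]$ with $f_A$ equal to $1$ on $\overline A$ and $0$ off $\phi'(A)$. Set $Z_A=f_A^{-1}([1/2,1])$ and $\psi(A)=f_A^{-1}((0,1])$, exactly as in the proof of Proposition~\ref{pnormal}. Then $A\subseteq Z_A\subseteq\psi(A)\subseteq\phi'(A)$, with $Z_A$ a zero set and $\psi(A)$ a cozero set, and the finite-subcover argument above applies.

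The only real obstacle is that $\phi(A)$ need not contain $\overline A$. Replacing $A$ by $\overline A$ resolves it, because $\NF$ is closed under taking closures in a $T_1$ space.
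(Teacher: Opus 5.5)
Your proof is correct and is essentially the paper's argument in contrapositive form. Like you, the paper replaces $A$ by $\overline{A}\in\NF$, applies Urysohn to $\overline{A}$ and $X\setminus\phi(\overline{A})$ to get a zero set inside a cozero set inside $\phi(\overline{A})$, and lifts a finite cover by the $\psi$-values to a finite cover by $\phi$-values; your note that $\overline{A}\in\NF$ for finite $A$ uses the $T_1$ part of normality is a point the paper leaves implicit.
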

\begin{proof}
The implication ($\Leftarrow$) is obvious. To prove the reverse implication, assume that a normal space $X$ is weakly $\NF$-compact. Consider any function $\phi: \NF\rightarrow o(X)$ such that $A\subseteq \phi(A)$ for all $A\in\NF$. By the Tietze-Urysohn Theorem~\cite[Theorem~2.1.8]{Eng}, for each closed $A\in \NF$ there exists a continuous function $f: X\rightarrow [0,1]$ such that $A\subseteq f^{-1}(0)$ and $X\setminus \phi(A)\subseteq f^{-1}(1)$. Put $Z_A=f^{-1}(0)$ and $V_A=f^{-1}([0,1/2))$. It is clear that $Z_A$ is a zero set, $V_A$ is a cozero set and $A\subseteq Z_A\subseteq V_A\subseteq \phi(A)$. Define a function $\psi: \NF\rightarrow cz(X)$ by $\psi(A)=V_{\overline{A}}$ for all $A\in\NF$. Since $X$ is weakly $\NF$-compact, there exists a finite family $\mathcal A\subseteq \NF$ such that $\bigcup_{A\in\mathcal A}\psi(A)=X$. Let $\mathcal B:=\{\overline{A}: A\in\mathcal A\}$. Taking into account that  $\psi(A)=\psi(\overline{A})\subseteq \phi(\overline{A})$ for each $A\in\NF$, we have that $\bigcup_{B\in\mathcal B}\phi(B)=X$. Hence $X$ is $\NF$-compact.  
\end{proof}

Propositions~\ref{rem_Baire} and~\ref{lweakly} imply the following.

\begin{proposition}\label{weaklyBaire}
A normal space $X$ has a remote point if and only if $X$ is not $\NF$-compact.    
\end{proposition}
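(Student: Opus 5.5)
The statement to prove is Proposition~\ref{weaklyBaire}: a normal space $X$ has a remote point if and only if $X$ is not $\NF$-compact. The plan is to chain the two immediately preceding results, so the argument should be essentially a two-line deduction.

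First, I would check the hypotheses of Proposition~\ref{rem_Baire}. It is stated for Tychonoff spaces, while remote points live in $X^*\subseteq\beta(X)$, which already presupposes that $X$ is Tychonoff. So I read ``normal'' in the statement in the sense used throughout this section, namely $T_1$ and normal. Urysohn's Lemma then makes $X$ Tychonoff. With that in hand, Proposition~\ref{rem_Baire} gives
\[
X \text{ has a remote point} \iff X \text{ is not weakly } \NF\text{-compact}.
\]

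Second, Proposition~\ref{lweakly}, which uses normality through the Tietze--Urysohn theorem, says that for normal $X$ weak $\NF$-compactness and $\NF$-compactness coincide. Negating both sides of that equivalence and substituting into the display above yields
\[
X \text{ has a remote point} \iff X \text{ is not } \NF\text{-compact},
\]
which is exactly the statement.

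The only step needing any care is the first one: making sure the normal space is Tychonoff, so that $\beta(X)$ and Proposition~\ref{rem_Baire} apply. I would settle it by the standing $T_1$ convention noted above, or equivalently by observing that the notion of a remote point is only defined for Tychonoff $X$. Everything else is direct substitution of the two cited propositions.
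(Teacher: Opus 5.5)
Your proposal is correct and matches the paper exactly: it obtains this proposition by combining Proposition~\ref{rem_Baire} (a Tychonoff space has a remote point iff it is not weakly $\NF$-compact) with Proposition~\ref{lweakly} (for normal spaces, weak $\NF$-compactness and $\NF$-compactness coincide). Your remark that ``normal'' includes $T_1$, so that $X$ is Tychonoff and $\beta(X)$ is defined, is a sensible clarification that the paper leaves implicit.
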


\begin{proposition}\label{Bairemeasure}
A normal space $X$ is not $\NF$-compact if and only if there is an outer regular finitely additive measure~$\mu$ defined on $\B(X)$ such that $\mu(A)<\infty$ for all $A\in \NF$ and $\mu(X)=\infty$.   
\end{proposition}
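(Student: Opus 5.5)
Both implications go through closed-open ultrafilters without accumulation points. For a normal space, Propositions~\ref{weaklyBaire} and~\ref{rem-ultra} show that not being $\NF$-compact is equivalent to having a remote point, and that this is equivalent to the existence of a closed-open ultrafilter with no accumulation points. Corollary~\ref{correspondence} ties such ultrafilters to outer regular $\{0,1\}$-measures on $\B(X)$ that vanish on $\NF$. So the task is to pass between $\{0,1\}$-valued measures and measures with $\mu(X)=\infty$ that are finite on $\NF$.

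($\Rightarrow$) Suppose $X$ is not $\NF$-compact. Then $X$ has a remote point, so by Proposition~\ref{rem-ultra} there is a closed-open ultrafilter $\F$ without accumulation points. By Corollary~\ref{correspondence}(ii), $\mu_\F$ is an outer regular $\{0,1\}$-measure on $\B(X)$ that vanishes on $\NF$. Define $\mu=\infty\cdot\mu_\F$, with the conventions $\infty\cdot 0=0$ and $\infty\cdot1=\infty$.
\begin{itemize}
\item Finite additivity holds because at most one of two disjoint sets lies in $\F$.
\item $\mu(A)=0<\infty$ for every $A\in\NF$, and $\mu(X)=\infty$.
\item Outer regularity passes over directly. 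If $\mu(A)=0$ then $\mu_\F(A)=0$, so there is an open $U\supseteq A$ with $\mu_\F(U)=0$. If $\mu(A)=\infty$, every superset also has measure $\infty$.
\end{itemize}

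($\Leftarrow$) Let $\mu$ be an outer regular finitely additive measure on $\B(X)$ with $\mu(X)=\infty$ and $\mu(A)<\infty$ for all $A\in\NF$. I would verify the definition of non-$\NF$-compactness directly. Let $\phi(A)$ be an open set with $A\subseteq\phi(A)$ and $\mu(\phi(A))<\infty$; outer regularity provides one, since $\mu(A)<\infty$. For any finite $\mathcal A\subseteq\NF$, finite subadditivity gives
\[
\mu\Bigl(\bigcup_{A\in\mathcal A}\phi(A)\Bigr)\le\sum_{A\in\mathcal A}\mu(\phi(A))<\infty .
\]
Subadditivity follows from finite additivity and monotonicity on the algebra $\B(X)$. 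Hence $\bigcup_{A\in\mathcal A}\phi(A)\neq X$, because $\mu(X)=\infty$, and $\phi$ witnesses that $X$ is not $\NF$-compact.

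This direction uses neither normality nor ultrafilters. Normality enters only in ($\Rightarrow$), through Propositions~\ref{weaklyBaire} and~\ref{rem-ultra}. The one point needing care is that outer regularity is stated as an infimum, so I must check it yields an open set of finite measure. That holds because the infimum of a set of values in $[0,\infty]$ is finite only if some element is finite. No step here looks genuinely hard; the main work is already done in the cited results.
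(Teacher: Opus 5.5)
Your proposal is correct and follows essentially the same route as the paper: for ($\Rightarrow$) you go through Propositions~\ref{weaklyBaire} and~\ref{rem-ultra} and Corollary~\ref{correspondence}, then rescale $\mu_\F$ to take the values $0$ and $\infty$; for ($\Leftarrow$) you use outer regularity to choose open sets $\phi(A)\supseteq A$ of finite measure and conclude by finite subadditivity. Your extra checks fill in details the paper dismisses as ``easy to see'': additivity and outer regularity of the rescaled measure, and the fact that a finite infimum forces some open superset to have finite measure.
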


\begin{proof}
($\Rightarrow$) By Proposition~\ref{weaklyBaire}, $X$ has a remote point. By Proposition~\ref{rem-ultra}, $X$ possesses a closed-open ultrafilter $\F$ with no accumulation points in $X$. Corollary~\ref{correspondence} implies that $\mu_\F$ is an outer regular $\{0,1\}$-measure that is defined on $\B(X)$ and vanishes on elements of $\NF$. Define a measure $\mu$ on $\B(X)$ as follows: for all $A\in \B(X)$, $\mu(A)=0$ if and only if $\mu_\F(A)=0$ and $\mu(A)=\infty$, otherwise. It is easy to see that $\mu$ is as required.

($\Leftarrow$) Fix any $A\in \NF$. Since the measure $\mu$ is outer regular and $\mu(A)<\infty$, there is an open set $U_A\supset A$ such that $\mu(U_A)<\infty$. Consider the function $\phi: \NF\rightarrow o(X)$ defined by $\phi(A)=U_A$. For any finite subfamily $\calA\subseteq\NF$ we have $$\mu(\bigcup_{A\in\calA}\phi(A))\leq \bigcup_{A\in\calA}\mu(U_A) <\infty.$$ Since $\mu(X)=\infty$, $\bigcup_{A\in\calA}\phi(A)\neq X$ for each finite $\calA\subseteq\NF$, as required.
\end{proof}

\section{Proofs of the main results}\label{5}
We are going to prove Theorem~\ref{main1}. 
We need to show that for a Tychonoff space $X$ the following conditions are equivalent:
\begin{enumerate}[\rm(a)]
    \item $X$ has a remote point;
    \item There exists a zero-cozero-closed-open ultrafilter on $X$ with no accumulation points;
    \item There exists a non-principal ultrafilter $u$ on $X$ such that $\uopen{u}=\uclosed{u}=\uzero{u}=\ucozero{u}$;
    \item $X$ is not weakly $\NF$-compact.
\end{enumerate}

\subsection*{Proof of Theorem~\ref{main1}}
The equivalence of items (a) and (b) is established in Proposition~\ref{remote},  whereas the equivalence of items (a) and (d) is established in Proposition~\ref{rem_Baire}. The implication (c)$\Rightarrow$(b) follows from Corollary~\ref{blabla} and Lemma~\ref{trivial1}. The implication (b)$\Rightarrow$(c) follows from Lemma~\ref{trivial2} and the fact that each ultrafilter, that extends a filter with no accumulation points, is non-principal. \qed   

\medskip

We are going to prove Theorem~\ref{wrB}. 
We need to show that for a normal space $X$ the following conditions are equivalent:
\begin{enumerate}[\rm(a)]
\item $X$ has a remote point;
\item There is a closed-open ultrafilter on $X$ with no accumulation points;
\item There exists a non-principal ultrafilter $u$ on $X$ such that $\uopen{u}=\uclosed{u}$;
\item $X$ is not $\NF$-compact;
\item There is an outer regular finitely additive $\{0,1\}$-measure~$\mu$ defined on $\B(X)$ such that $\mu(A)=0$ for all $A\in\NF$;
\item There is an outer regular finitely additive measure~$\mu$ defined on $\B(X)$ such that $\mu(X)=\infty$ and $\mu(A)<\infty$ for all $A\in\NF$.   
\end{enumerate}    

\subsection*{Proof of Theorem~\ref{wrB}}

The equivalence of items (a) and (b) is established in Proposition~\ref{rem-ultra}. The equivalence of items (a) and (d) is proven in Proposition~\ref{weaklyBaire}. The equivalence of items (b) and (e) follows from Corollary~\ref{correspondence}. The equivalence of items (d) and (f) is showed in Proposition~\ref{Bairemeasure}.  The implication (c)$\Rightarrow$(b) follows from Corollary~\ref{blabla} and Lemma~\ref{trivial1}. The implication (b)$\Rightarrow$(c) follows from Lemma~\ref{trivial2} and the fact that each ultrafilter, that extends a filter with no accumulation points, is non-principal.  \qed

\medskip

We are going to prove Theorem~\ref{Stone}.
We need to show that for a Tychonoff space $X$, $\rho(X)$ is homeomorphic to a subspace of $S(\B(X))$.
Moreover, if $X$ is additionally locally compact and $\sigma$-compact, then $\rho(X)$ is $\omega$-bounded.

\subsection*{Proof of Theorem~\ref{Stone}}
By Proposition~\ref{remote}, each remote point $p\in X^*$ is in fact a closed-open-zero-cozero ultrafilter with no accumulation points. It is easy to check that each closed-open ultrafilter on a space $X$ is an ultrafilter on the Boolean algebra $\B(X)$. So, each remote point $p\in X^*$ can be identified with the unique element $\phi(p)\in S(\B(X))$. Let us check that the defined above map $\phi\colon \rho(X)\rightarrow S(\B(X))$ is a topological embedding. It is clear that $\phi$ is injective. Fix a remote point $p\in X^*$ and an open neighborhood $W$ of $\phi(p)$. Without loss of generality we can assume that $W=\{u\in S(\B(X)): A\in u\}$, for some $A\in \B(X)$. Since $\phi(p)\in W$, we get $A\in \phi(p)$, which implies that $A\in p$. Since $p$ is a cozero filter, there exists a cozero set $B\in p$ such that $B\subseteq A$. Then $U:=\{y\in\rho(X): X\setminus B\notin y\}$ is an open neighborhood of $p$ such that $\phi(U)\subseteq W$. Hence $\phi$ is continuous. It remains to show that the map $\phi$ is open onto its image. Fix a zero subset $Z$ of $X$ and a basic open set $V=\{y\in \rho(X): Z\notin y\}$ in $\rho(X)$. It is a routine to check that $$\phi(V)=\{u\in S(\B(X)): X\setminus Z\in u\}\cap \phi(\rho(X)).$$ Hence $\phi$ is open onto its image, as required. 

Assume that the space $X$ is locally compact and $\sigma$-compact. Then there exists a family $\{V_n:n\in\w\}$ of open subsets of $X$ such that $\bigcup_{n\in\w}V_n=X$, $\cl_X(V_n)$ is compact and $\overline{V_n}\subseteq V_{n+1}$ for each $n\in\w$. 
If $\rho(X)=\emptyset$, then there is nothing to prove. Assume that $\rho(X)\neq \emptyset$. Note that $\sigma$-compactness of $X$ implies that $X$ is normal. So, elements of $\rho(X)$ are closed-open ultrafilters on $X$ with no accumulation points, and the topology on $\beta(X)$ is generated by the sets $\{z\in\beta(X): O\in z\}$, where $O$ is an open subset of $X$.  Fix a countable subset $C=\{u_n:n\in\w\}\subseteq \rho(X)$. We are going to show that $\cl_{\rho(X)}(C)=\cl_{\beta(X)}C$, which in turn would imply that $\rho(X)$ is $\omega$-bounded. 
It is clear that $\cl_{\rho(X)}(C)\subseteq \cl_{\beta(X)}C$. To show the converse inclusion, fix any $y\in \cl_{\beta(X)}(C)$ and any nowhere dense subset $A$ of $X$. It suffices to show that $y\notin\cl_{\beta(X)}(A)$. Without loss of generality we can assume that $A$ is closed in $X$, as otherwise we can substitute $A$ with $\cl_X(A)$. Since $C\subseteq \rho(X)$, for each $i\in\w$ there exists an open set $U_i\in u_i$ such that $\cl_X(U_i)\cap A=\emptyset$. Let $W_i:=U_i\setminus \cl_X(V_i)$. Since $\cl_X(V_i)$ is compact, $W_i\in u_i$ for all $i\in\w$. For each $x\in X$ there exists $i(x)\in\w$ such that $V_{i(x)}$ is an open neighborhood of $x$. Thus the family $W:=\{W_i:i\in\w\}$ is locally finite in $X$.  It follows that $$\cl_X(\bigcup W)=\bigcup_{n\in\w}\cl_X(W_n)\subseteq \bigcup_{n\in\w}\cl_X(U_n)\subseteq X\setminus A.$$  
We claim that $\cl_X(\bigcup W)\in y$. Otherwise, since $y$ is a closed ultrafilter, $X\setminus \cl_X(\bigcup W)\in y$. But then $\{z\in\beta(X): X\setminus \cl_X(\bigcup W)\in z\}$ is an open neighborhood of $y$ disjoint with $C$, which contradicts the assumption. Since $\cl_X(\bigcup W)\subseteq X\setminus A$, the set $\{z\in\beta(X): X\setminus A\in z\}$ is an open neighborhood of $y$ disjoint with $A$. Hence $y\notin\cl_{\beta(X)}(A)$, as required.  \qed

%



\end{document}